\renewcommand\epsilon\varepsilon
\renewcommand\phi\varphi
\renewcommand\geq\geqslant
\renewcommand\leq\leqslant
\renewcommand\ln\log
\newcommand{\interior}[1]{%
	{\kern0pt#1}^{\mathrm{o}}%
}
\newcommand{\E}{\mathbb{E}}
\newcommand{\tbar}{\overline{\mathcal{T}}}
\newcommand\RR{\mathbb{R}}
\newcommand\ab\allowbreak
\newcommand{\cP}{\mathcal{P}}
\newtheorem{The}[equation]{Theorem}
\newtheorem{Lem}[equation]{Lemma}
\newtheorem{Pro}[equation]{Proposition}
\newtheorem{Def}[equation]{Definition}
\newtheorem{Rem}[equation]{Remark}
\numberwithin{equation}{section}
\newtheorem{Exe}[equation]{Example}
\title{From Hopf-Lax formula to optimal weak transfer plan}
\author{Yan Shu\footnote{Mod\'elisation al\'eatoire de Paris Ouest Nanterre La D\'efense(MODAL'X), Email: yan.shu.prof@gmail.com}}
\date{\today}
\begin{document}
\maketitle
\begin{abstract}
We study the properties of Hopf-Lax formula restricted to convex functions and provide a characterization of the optimal transfer plan for weak transport problems on the real line. On $n$ dimensional real space, we also provide a sufficient condition on the potential function such that the optimal plan of the classical Monge-Kantorovich problem does not depend on the cost function. As a byproduct, we establish a link between the combinatorial object (the permutation polytope) and the Hamilton Jacobi equation. 
\end{abstract}
key words:  Weak transport costs; Hopf-Lax formula; Monge-Kantorovich problem; Hamilton Jacobi equation

\section{Introduction}
\subsection{Monge-Kantorovich problem and weak transport cost}
\indent Throughout the paper, the space would be Euclidean space $\RR$ or $\RR^n$.  We denote $\cP(\RR^n)$ the set of probability measures on $\RR^n$,  $\cP_1(\RR^n)$ the set of probability measures on $\RR^n$ with finite first moment and $\|.\|$, the Euclidean norm.

Given two probability measures $\mu,\nu \in \cP_1(\RR^n)$, recall that the classical Monge-Kantorovich problem is to minimize the following transportation cost:
$$\mathcal{T}_c(\mu,\nu)=\inf_\pi\int c(x,y)d\pi\in [0,\infty],$$
where $c:\RR^n\times \RR^n \rightarrow \RR^+$ is called the cost function and $\pi$ is a measure coupling with marginal $\mu$ and $\nu$. Readers can refer to \cite{Villani} for an introduction of the optimal transport theory. One of the most interesting cases is that the cost function $c$ is a convex function with respect to the distance $d$.  One of the well-known results about the optimal transfer plan is due to Brenier \cite{B91} in 1991, he proved that on a finite dimensional real space, the optimal transfer plan is a gradient of some convex function. In particular, in one dimension, the optimal transfer plan does not depend on the cost function if the cost function is convex.  We also refer readers to related results about the optimal transfer plan in \cite{GM2000}, \cite{M95}, \cite{R97}.

 Here we are interested in the optimal transfer plan of a variant of Monge-Kantorovich problem, which is introduced in \cite{GRST15},  the general optimal transportation problem, defined by
 \begin{equation}\label{tbar}
\overline{ \mathcal{T}}_c(\nu|\mu)=\inf_{\pi}\int c(x,p)\,\mu(dx).
\end{equation}
The cost function $c$ is defined on the space $\RR^n\times \cP(\RR^n)$, the infimum runs over all couplings $\pi(dxdy)=p(x,dy)\mu(dx)$ of $\mu$ and $\nu$ and where $p(x,\,\cdot\,)$ denotes the disintegration kernel of $\pi$ with respect to its first marginal. Further papers directly related to this problem include \cite{BBP2018, BBP2019, FS15, GJ2018, GRSST15, samson2017, Sh15, SS16}.

In terms of random variables, one has the following interpretation:
\[
\overline{ \mathcal{T}}_c(\nu|\mu)=\inf \E \left(c(X,\E(Y|X))\right),
\]
whereas
\[
\mathcal{T}_c(\nu,\mu)=\inf \E \left(c(X,Y)\right),
\]
where in both cases the infimum runs over all random vector $(X,Y)$ such that $X$ has law $\mu$ and $Y$ has law $\nu$.  This general transport cost \eqref{tbar} plays an important role in the study of Talagrand type transport inequalities and some log-Sobolev inequalities, especially in a discrete space such as graphs or a subset of vector space. Those transport inequalities immediately yield concentration results and tensorization properties of the measure (see \cite{FS15,GRST15,GRSST15,Sh15,SS16}).

Throughout the  paper, the cost function is
of form:
$c(x,y)=\theta(x-y),$
where $\theta:\RR^n\mapsto \RR^+$ is a non-constant convex function vanishing at $0$ and radially symmetric with respect to the origin. In one dimension, $\theta$ is even, and in $n$ dimension, $\theta$ only depends on the distance between $x$ and  $y$.
 In what follows, taking two probability measures $\mu, \nu \in \cP_1(\RR^n)$, the optimal weak transport cost of $\nu$ with respect to $\mu$ and cost function $\theta:\RR^n\mapsto \RR$ means
\[
\overline{ \mathcal{T}}_\theta(\nu|\mu)=\inf_{\pi}\int \theta\left(x-\int y\,p(x,dy)\right)\,\mu(dx)
\]
where the infimum runs over all couplings $\pi(dxdy)=p(x,dy)\mu(dx)$ of $\mu$ and $\nu$, and where $p(x,\,\cdot\,)$ denotes the disintegration kernel of $\pi$ with respect to its first marginal.
Since the $\theta$ is convex,  by Jensen's inequality, one has
\[
\overline{ \mathcal{T}}_\theta(\nu|\mu)\leq \mathcal{T}_\theta(\nu,\mu).
\]

This weak transport cost is deeply linked to the Monge-Kantorivich problem. Together with Gozlan, Roberto, Samson and Tetali, following Strassen's theorem \cite{Str65}, we proved  in \cite{GRSST15} that,
\[
\tbar_\theta (\nu|\mu)=\inf_{\nu_1\preceq \nu} \mathcal{T}_\theta (\nu_1,\mu),
\]
where $\preceq$ is the convex order, defined as:
$$\nu_1\preceq \nu \qquad \Leftrightarrow \qquad \forall f\ convex, \int f d\nu_1\leq \int f d\nu.$$

  Moreover, in one dimension, the measure $\nu_1$ which achieves the infimum does not depend on the choice of the  convex cost function $\theta$. As an application, one can deduce a completed characterization of a convex modified Log-Sobolev inequality~\cite{SS16}.

\subsection{Presentation of the results}
It is natural to ask the following questions about the optimal transfer plan of weak transport problem:
\begin{itemize}
\item When the weak transport would be equal to the classical transport?
\item Does the optimal coupling depend on the cost function $\theta$?
\end{itemize}

We approach those questions by the following duality framework of the weak transport cost. Following \cite[Theorem 2.11]{GRST15}, for $\mu,\nu \in \cP_1(\RR^n)$, and $\theta:\RR^n\mapsto \RR$ a convex function such that $\theta(x)\geq a\|x\| + b$ for some $a>0$ and $b\in \RR$, it holds
\begin{equation}\label{duality}
\tbar_\theta(\nu|\mu)=\sup_f \int Q_1^\theta f d\mu-\int fd\nu 
\end{equation}
where the supremum runs over all function $f$ convex, Lipschitz and bounded from below, and where  
$$Q_t^\theta f(x)=\inf_{y\in \RR^n}\left\{f(y)+t\theta\left(\frac{y-x}{t}\right) \right\}.$$
The above equation is often called the Hopf-Lax formula, in some references the operator $Q_t^\theta$ is also called the inf-convolution operator and often denoted simply by $Q^\theta$ for $t=1$. The Hopf-Lax formula is known as the solution of a Hamilton Jacobi equation, and has been widely studied in many different contexts. We remark as well in the proof of \cite[Theorem 5.5]{BBP2018}, Backhoff-Veraguas, Beiglbock and Pammer shown that a maximizer of equation \eqref{duality} exists even with a weaker assumption. 

 In this paper, since the cost function $\theta$ is convex, real valued, positive and only depended on the euclidean norm, it satisfies the growth condition $\theta(x)\geq a\|x\| + b$ and the condition (A+) in \cite{BBP2018}. Therefore we can apply the duality theorem and the existence of the maximizer here.

Apart from the introduction, this paper is divided into three sections. In section 2, the space will be the real line $\RR$, and in section 3, the space will be $\RR^n$ and we will present some applications in section 4.

In section 2, we stay in one dimension. We provide an equivalent condition for the equality between the weak transport cost and the classical Monge-Kantorovich transport cost (theorem \ref{The:main1}), which states that
$$\tbar_\theta(\nu|\mu)=\mathcal{T}_\theta(\nu,\mu)$$
 if and only if the difference between the inverse cumulative functions of $\mu$ and $\nu$ is non-decreasing, precisely, the function $$F_\mu^{-1}-F_\nu^{-1}$$ is non-decreasing. This equivalence also has been obtained recently by Gozlan and Juillet in a slightly different form in \cite{GJ2018} and by Backhoff Veraguas,  Beiglboeck and Pammer in \cite{BBP2019}.

%
%

Furthermore, according to Brenier \cite{B91}, in one dimension, the optimal mapping of a classical transport does not depend on the cost function as soon as the cost function is convex.  In \cite{GRSST15}, the same result is obtained for weak transport cost. We will give a new proof of this result in section 2, using a very different argument. During this approach, a byproduct about the Hamilton-Jacobi equation is obtained, which might have its own interest (theorem \ref{cor:Q+}).

\medskip

In section 3, the space will be $\RR^n$.  We will show in theorem \ref{Themain1dim_n} an extension of the results of section 2, which provide equivalence conditions (under some smoothness properties) such that $\tbar_\theta(\nu|\mu)=\mathcal{T}_\theta(\mu,\nu)$ holds. We will define Property $\mathcal{F}$, which reduces to convexity in one dimension and plays the role of the convexity in $n$ dimensions. The condition that there is equality between the weak transport cost and the classical transport cost
 is deeply related to Property $\mathcal{F}$.

In section 4, we briefly explain some applications on the infimum convolution inequality introduced by Maurey in \cite{Mau91}.

\section{Weak transport on the line}
In this section we will focus on the real line.
A cost function in this section is a function $\theta:\RR\mapsto \RR^+$, convex, even, satisfying $\theta(0)=0$. 

We remind here the notion of cyclical monotone of a multivalued mapping~\cite[Page 238]{R70}. A multivalued mapping $\rho$ from $\RR^n$
to $\RR^n$ is called cyclically monotone if one has
$$\langle x_1 - x_0, x_0^*\rangle + \langle x_2 - x_1, x_1^*\rangle +\cdots + \langle x_0 - x_m, x_m^*\rangle \leq 0.$$
for any set of pairs $(x_i, x_i^*)$, $i = 0, 1, ... , m$ ($m$ arbitrary) such that
$x_i^* \in \rho(x_i)$. 
The property of cyclical monotone characterizes the gradient of a convex function according to \cite[Theorem 5.6]{R70}, which states that, $\rho$ is cyclically monotone if and only if there exist a convex proper function $f$ on $\RR^n$ such that $\rho \subset \partial f$.  
\subsection{A remark on Hopf-Lax formula}
We begin with some development of the Hopf-Lax formula. The key observation is the following lemma.

\begin{Lem}\label{The:main}
Let $I,J$ be two subset of $\RR$ with strictly positive Lebesgue measure. Given $t>0$, let $T_t:I\subset \RR \rightarrow J\subset \RR$ be a real valued function.  Then the following statements hold.

$(i)$ If there exists a real-valued convex function $f$ defined on $\RR$ and a strictly convex cost $\theta$ (recall that a cost function here should be positive and vanishing at $0$) such that for all $x\in I$, it holds
\begin{equation}\label{star}
Q_t^\theta f(x)= f(T_t(x))+t\theta\left(\frac{T_t(x)-x}{t}\right).
\end{equation}
then $T_t$ is non-decreasing and $x\mapsto T_t(x)-x$ is non-increasing.

$(ii)$ Inversely, taking a function $T_t$ defined on $I\subset\RR$, if $T_t$ is non-decreasing and $x\in J \mapsto T_t(x)-x$ is non-increasing, then for all convex cost $\theta$, there exists a closed proper convex function $f$, such that \eqref{star} holds for all $x\in I$.
\end{Lem}
\begin{proof}
$(i)$. Given $t>0$ and $x\in I$,  the function $G_x:u\mapsto f(u)+t\theta\left(\frac{u-x}{t}\right)$ is defined on $\RR$. According to convexity of $f$ and $\theta$, $G$ is strictly convex. Therefore, it has at most one minimum. Since a convex function always has a right derivative and a left derivative, then it holds for all $x\in I$:
\begin{equation}\label{carTt}
0\in [\partial_- G_x(T_t(x)),\partial_+ G_x(T_t(x))].
\end{equation}
We deduce that for all $x\in I$:
$$\partial_+ f(T_t(x))+\partial_+\theta\left(\frac{T_t(x)-x}{t}\right)\geq 0.$$
We will first prove that $T_t$ is non-decreasing.
For any $x,y\in I$, $x<y$, since $\theta$ is strictly convex, we deduce that
\begin{align*}
&\partial_+G_x(T_t(y))=\partial_+ f(T_t(y))+\partial_+\theta\left(\frac{T_t(y)-x}{t}\right)\\
         &> \partial_+f(T_t(y))+\partial_+\theta\left(\frac{T_t(y)-y}{t}\right)\geq 0 \geq \partial_+G_x(T_t(x)).
\end{align*}
Together with the strict convexity of $G_x$, we deduce that $T_t(y)\geq T_t(x)$. 

Now we turn to prove that $x\mapsto T_t(x)-x$ is non-increasing on $I$. We will show that for $x,y\in I$ such that $x<y$, it holds $T_t(x)-x\geq T_t(y)-y$. Since $T_t$ is non-decreasing, $T_t(x)\leq T_t(y)$.

If $T_t(x) = T_t(y)$, it follows immediately $T_t(x)-x>T_t(y)-y$. 

Now we assume that $T_t(x)<T_t(y)$. Since $f$ is convex, it holds 
\begin{equation}\label{eqff}
\partial_+f(T_t(x))\leq \partial_-f(T_t(y)).
\end{equation}
Now applying \eqref{carTt} for $x$ and $y$, we deduce:
\begin{equation}\label{eqgg}
   \partial_+ G_x(T_t(x))\geq 0 \geq \partial_- G_y(T_t(y)).
\end{equation}
Combining equation \eqref{eqff} and \eqref{eqgg},  we deduce
$$
\partial_+\theta\left(\frac{T_t(x)-x}{t}\right)\geq 
\partial_-\theta\left(\frac{T_t(y)-y}{t}\right).
$$
By convexity of $\theta$, it holds $T_t(x)-x\geq T_t(y)-y$. Thus $x \mapsto T_t(x)-x$ is non-increasing on $I$.

 $(ii)$. Assume that the function $T_t$ is non-decreasing on $I$ and the function $x\mapsto T_t(x)-x$ is non-increasing on $I$. 
Define a multi valued mapping $\rho$ from $\RR$ to $\RR$ such that
$$y^*\in \rho(y) \Leftrightarrow y^* =  -\partial_+\theta\left(\frac{y-x}{t}\right),$$
for some $x$ such that $T_t(x) = y$.

For any set of pairs $(y_i,y_i^*)$, $i = 0,1,...,m$, ($m$ arbitrary) such that $y_i^*\in \rho(y)$, there exist $x_0,...,x_m$, such that $T_t(x_i) = y_i$. We claim that $y_i< y_j$ implies $y_i^*\leq y_j^*$, for any $i,j \in \{0,1,...,m\}$. In fact, if $y_i<y_j$, then by monotony of $T_t$, it holds $x_i<x_j$. Since $x\mapsto T_t(x) - x$ is non-increasing, we deduce that $T_t(x_i)-x_i\geq T_t(x_j)-x_j$. Therefore, by convexity of $\theta$,  
$$y_i^* = -\partial_+\theta\left(\frac{T_t(x_i)-x_i}{t}\right)\leq -\partial_+\theta\left(\frac{T_t(x_j)-x_j}{t}\right) = y_j^*.$$
Therefore, according to rearrangement inequality, it holds:
\begin{align*}
     &\langle y_1 - y_0, y_0^* \rangle + \langle y_2 - y_1, y_1^*\rangle + \cdots + \langle y_0 - y_m,y_m^* \rangle \\
= & \sum_{i = 0}^m y_i y_i^* - \sum_{i = 0}^m y_i y_{i+1} \leq 0,
\end{align*}
where we denote $y_{m+1} = y_0$ and in dimension one, the bracket $\langle,\rangle$ is the multiplication. Thus, $\rho$ is cyclically monotone. According to \cite[Page 238, Theorem 24.8]{R70}, there exists a closed proper convex function $f$ (thus lower semi-continuous) defined on $\RR$, such that 
 for all $y\in \RR$, $\rho(y)\subset \partial f(y)$ ($\rho(y)$ could be empty). Therefore, for all $x\in I$, it holds
$$-\partial_+\theta\left(\frac{T_t(x)-x}{t}\right)\in [\partial_-f(T_t(x)),\partial_+f(T_t(x))],$$
which leads to
$$0\in \left[\partial_-f(T_t(x))+\partial_-\theta\left(\frac{T_t(x)-x}{t}\right),\partial_+f(T_t(x))+\partial_+\theta\left(\frac{T_t(x)-x}{t}\right)\right].$$
Thus equation \eqref{star} holds. The conclusion follows.
\end{proof}

\subsection{Characterization of equality between weak transport cost and transport cost}
We firstly recall the definition of cumulative distribution function and its inverse.

For a probability measure $\mu\in\mathcal{P}(\RR)$, denote $F_\mu$ the cumulative distribution function of $\mu$, i.e.
$$F_\mu(x)=\mu(-\infty,x],$$
and define the generalized inverse of $F_\mu^{-1}$ by
$$F_\mu^{-1}(t)=\inf \{x\in \RR; F_\mu(x)>t\}.$$

\begin{The}\label{The:main1}
Let $\mu,\nu\in\mathcal{P}_1(\RR)$ and let the cost function $\theta:\RR\mapsto \RR^+$ be even,  strictly convex and vanishing at $0$. Assume that $\mu$ and $\nu$ are absolutely continuous with respect to Lebesgue measure. Assume that the weak transport $\tbar_\theta(\nu|\mu)$ is finite, then
$$\tbar_\theta(\nu|\mu)=\mathcal{T}_\theta(\mu,\nu)$$
if and only if
$F^{-1}_\mu-F^{-1}_\nu$
is non-decreasing.
\end{The}


In one dimension, given two probability measures $\mu$ and $\nu$ which are absolutely continuous with respect to Lebesgue measure, the optimal transport mapping for transport problem is transporting mass from location $F_\mu^{-1}(x)$ to location $F_\nu^{-1}(x)$. This mapping is in fact the monotone rearrangement $T$ from $\mu$ to $\nu$. We will play with the Kantorovich duality formula related to this mapping $T$ for the weak transport problem.

\begin{proof}
We assume at first that $F_\mu^{-1}-F_\nu^{-1}$ is non-decreasing. Denote $T$ the monotone rearrangement from $\mu$ to $\nu$ and denote $I:=\mathrm{supp}(\mu)$ the support of $\mu$ and $J:=\mathrm{supp}(\nu)$ the support of $\nu$. Thus $T$ is an one-to-one mapping from $I$ to $J$, and for all $x\in I$, 
\begin{equation}\label{eqT}
F_\mu(x)=\mu(-\infty, x)=\nu(-\infty, T(x))=F_\nu(T(x)).
\end{equation}
Since $\mu, \nu$ are absolutely continuous, $F_\mu^{-1}$ and  $F_\mu^{-1}$ are well defined on $(0,1)$. 
Now given $u\in (0,1)$, denote $u = F_\mu(x) = F_\nu(T(x))$. According to the equality \eqref{eqT}, it holds
$$F_\nu^{-1}(u)-F_\mu^{-1}(u)=T(x) - x .$$
According to our hypothesis, $x\mapsto T(x)-x$ is non-increasing. Moreover, notice that $T$ is the monotone rearrangement from $\mu$ to $\nu$, $T$ is non-decreasing. We can apply lemma \ref{The:main}, and there exists a closed proper convex function $f$ (thus lower semi-continuous), such that for all $x\in I$:
$$Q^\theta f(x)=\inf_{y\in \RR} \{f(y)+\theta(y-x)\}=f(T(x))+\theta(T(x)-x).$$
Since $f$ is proper, there exist $x_0\in\RR$ such that $f(x_0)<\infty$. By definition of $Qf$, it holds
$$
Q^\theta f(x) = f(T(x))+\theta(T(x)-x)\leq f(x_0) + \theta(x_0 -x).
$$
Thus, for $R>|x_0|$ and for all $x\in I\cap [-R,R]$, 
$$f(T(x))\leq f(x_0) + \theta(x_0 -x)- \theta(T(x) -x) \leq f(x_0) + \theta(2R).$$
Therefore $f$ is a lower semi-continuous, bounded and lipschitz on the interior of $I\cap [-R,R]$. 

By Kantorovich duality from \cite{GRST15}, we have
\begin{align*}
\tbar_\theta(\nu|\mu)&=\sup_{\phi\ convex, Lip,\ l.s.c.} \int Q^\theta \phi d\mu-\int \phi d\nu \\
&\geq \int_{I\cap [-R,R]} Q^\theta f d\mu-\int_{I\cap [-R,R]} f d\nu=\int_{I\cap [-R,R]} \theta(T(x)-x)d\mu\\
&=\mathcal{T}_\theta(\mu,\nu)-\epsilon_R\geq \tbar_\theta(\nu|\mu)-\epsilon_R.
\end{align*}
Now let $R\rightarrow \infty$, $\epsilon_R\rightarrow 0$, and we obtain
$$\tbar_\theta(\nu|\mu)=\mathcal{T}_\theta(\mu,\nu).$$
Now assume that $\tbar_\theta(\nu|\mu)=\mathcal{T}_\theta(\mu,\nu)$.  
 According to \cite[Theorem 5.5]{BBP2018}, for $\mu,\nu\in \cP_1(\RR)$, if $\tbar_\theta(\nu|\mu)$ is finite, there exists a convex function $f$ such that 
$$\tbar_\theta(\nu|\mu)= \int Q^\theta fd\mu-\int fd\nu.$$
Thus,
\begin{align*}
\tbar_\theta(\nu|\mu)&= \int Q^\theta fd\mu-\int fd\nu \\
&\leq \int f(T(x))+\theta(T(x)-x)d\mu-\int fd\nu =\int \theta(T(x)-x)d\mu\\
&=\mathcal{T}_\theta(\mu,\nu).
\end{align*}
Therefore, $Q^\theta f(x)= f(T(x))+\theta(T(x)-x)$ $\mu$ almost-surely.
Since $f$ is convex and $\theta$ is strictly convex, according to lemma \ref{The:main} (i), we obtain that $T(x)-x$ is non-increasing $\mu$ almost surely, together with the monotonicity of the function $F_{\mu}^{-1}$,  the conclusion follows.
\end{proof}

\subsection{Weak transfer plan}
In this section we give an alternative proof of a theorem in \cite{GRSST15}, as a consequence of lemma \ref{The:main}, which is the following:

\begin{The}\label{add} \cite[Theorem 1.4]{GRSST15}
Let $\alpha, \beta$ and $\theta$ be convex cost functions  satisfying $\alpha+\beta=\theta$. Let $\mu,\nu \in \mathcal{P}_1(\RR)$.  Assume that $\tbar_\theta(\nu|\mu)$ is finite, it holds
\begin{equation}\label{eqbut}
\tbar_\theta(\nu|\mu)=\tbar_\alpha(\nu|\mu)+\tbar_\beta(\nu|\mu).
\end{equation}
\end{The}

We need to prove the following proposition at first.

\begin{Pro}\label{thm:Q+}
Let $\alpha, \beta, \theta:\RR \rightarrow \RR$ be convex cost functions, even and of class $C^1$ satisfying $\alpha+\beta=\theta$. We assume that $\theta$ is strictly convex.
Then for all convex function $f:\RR\rightarrow \RR$ bounded from below of class $C^1$, there exists convex functions $f_1, f_2:\RR\mapsto \RR$, bounded from below and of class $C^1$, such that
\begin{equation*}
f=f_1+f_2
\end{equation*}
and for all $t>0$
\begin{equation*}
Q_t^\theta f=Q_t^\alpha f_1+Q_t^\beta f_2
\end{equation*}
more precisely, $$f_1(x)=a+\int_0^x \alpha' \circ \theta'^{-1}\circ f'(u)  du, $$
with  any constant $a\in \RR$.
\end{Pro}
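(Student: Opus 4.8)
The plan is to locate $f_1$ and $f_2$ so that the single point $y$ at which the infimum $Q_t^\theta f(x)$ is attained is \emph{simultaneously} the minimizer of the two sub-problems defining $Q_t^\alpha f_1(x)$ and $Q_t^\beta f_2(x)$. Once this is arranged the value identity is immediate: writing $y=T_t^\theta(x)$ for the minimizer, which is unique by strict convexity of $\theta$, one has
\[
Q_t^\theta f(x)=f(y)+t\theta\!\left(\tfrac{y-x}{t}\right)=\Big[f_1(y)+t\alpha\!\left(\tfrac{y-x}{t}\right)\Big]+\Big[f_2(y)+t\beta\!\left(\tfrac{y-x}{t}\right)\Big],
\]
and each bracket equals $Q_t^\alpha f_1(x)$, resp.\ $Q_t^\beta f_2(x)$, precisely because $y$ minimizes each sub-problem. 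The entire difficulty is thus pushed into producing a common minimizer, and the construction of $f_1,f_2$ must be independent of $t$.

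Recall from Theorem~\ref{The:main} (equation~\eqref{carTt}) that $y=T_t^\theta(x)$ is characterized by $f'(y)+\theta'\!\left(\tfrac{y-x}{t}\right)=0$. Using $\theta'=\alpha'+\beta'$, this single relation splits as
\[
\Big(f_1'(y)+\alpha'\!\left(\tfrac{y-x}{t}\right)\Big)+\Big(f_2'(y)+\beta'\!\left(\tfrac{y-x}{t}\right)\Big)=0,
\]
so it suffices to build $f_1$, and set $f_2:=f-f_1$, so that each parenthesis vanishes on its own. This is what the stated formula achieves: I would define $f_1':=\alpha'\circ\theta'^{-1}\circ f'$, which is legitimate because strict convexity of $\theta$ makes $\theta'$ a continuous increasing bijection onto its range and $f$ bounded below guarantees that each $Q_t^\theta f(x)$ is attained. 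Then, using $f'=\theta'\circ\theta'^{-1}\circ f'$, one gets $f_2'=f'-f_1'=(\theta'-\alpha')\circ\theta'^{-1}\circ f'=\beta'\circ\theta'^{-1}\circ f'$. Both $f_1'$ and $f_2'$ are compositions of nondecreasing continuous functions, hence nondecreasing and continuous, so $f_1$ and $f_2$ are convex and of class $C^1$ as required, and neither depends on $t$.

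It then remains to verify the two first-order conditions at $y=T_t^\theta(x)$. Substituting $\tfrac{y-x}{t}=\theta'^{-1}(-f'(y))$, read off from the $\theta$-condition, the target identity $f_1'(y)+\alpha'\!\left(\tfrac{y-x}{t}\right)=0$ reduces to the symmetry $\alpha'\!\big(\theta'^{-1}(f'(y))\big)+\alpha'\!\big(\theta'^{-1}(-f'(y))\big)=0$, which holds because the cost functions are even, so that $\theta'$, $\theta'^{-1}$ and $\alpha'$ are odd; the $\beta$-condition is identical. Since the sub-problem objectives $y\mapsto f_1(y)+t\alpha(\tfrac{y-x}{t})$ and $y\mapsto f_2(y)+t\beta(\tfrac{y-x}{t})$ are convex, a critical point is automatically a global minimizer, so mere convexity (not strict convexity) of $\alpha,\beta$ is enough. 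I expect this verification — confirming that the algebraically natural splitting of the gradient genuinely makes $y$ the minimizer of each sub-problem, together with the sign/symmetry bookkeeping and the well-definedness of $\theta'^{-1}\circ f'$ on the whole line (i.e.\ the range of $f'$ sitting inside that of $\theta'$) — to be the main obstacle; everything else, namely convexity, $C^1$ regularity, the value identity, and $t$-independence, is then routine.
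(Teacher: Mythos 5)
Your construction is the same as the paper's: the same choice $f_1'=\alpha'\circ\theta'^{-1}\circ f'$, $f_2=f-f_1$, and the same mechanism of showing that the minimizer $T_t(x)$ of the $\theta$-problem satisfies the first-order condition of each sub-problem, so that convexity of the sub-problem objectives upgrades it to a common global minimizer and the value identity follows by summing. Your sign bookkeeping (working with the even extensions, whose derivatives are odd) is correct, and in fact more careful than the paper, which is sloppy about signs.

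There is, however, a genuine gap exactly at the point you flag as ``the main obstacle'' and then wave away: the well-definedness of $\theta'^{-1}\circ f'$ on the whole line. Your justification --- that $f$ bounded from below guarantees each $Q_t^\theta f(x)$ is attained --- does not give this. Attainment indeed holds for every $x$ (the objective is coercive), yet the range of $f'$ need not sit inside the range of $\theta'$: take $\theta(u)=\sqrt{1+u^2}-1$, a strictly convex $C^1$ cost with $\theta'(\RR)=(-1,1)$, and $f(y)=y^2$, so $f'(\RR)=\RR$. Then $\theta'^{-1}\circ f'(u)$ is undefined for $|u|\geq 1/2$, so your $f_1$ does not exist on all of $\RR$, even though every infimum is attained. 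What is true is that every minimizer $y=T_t(x)$ lies in $I:=\{u\in\RR:\ f'(u)\in\theta'(\RR)\}$, because the first-order condition forces $f'(y)$ (up to sign) into $\theta'(\RR)$. The paper's proof exploits precisely this: it defines $f_1$ by your integral formula only on $I$ and extends it affinely outside $I$, with slope given by the one-sided limits of $f_1'$ at the endpoints of $I$ (these limits are finite since $0\leq\alpha'\leq\theta'$ on $[0,\infty)$, and $\theta'$ is bounded whenever $I\neq\RR$); this keeps $f_1$ and $f_2=f-f_1$ convex and $C^1$, and since your verification of the first-order conditions only ever evaluates $f_1'$ and $f_2'$ at points of $I$, the rest of your argument goes through verbatim. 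With this repair your proof is complete and coincides with the paper's.
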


From this proposition, combining with the fact that $Q_t^\theta f$ is in fact the solution of the Hamilton Jacobi equation \eqref{HJe} \cite{evans10}, one deduces immediately of the following theorem.

\begin{The}\label{cor:Q+}
Let $\theta:\RR \rightarrow \RR$ be a strictly convex cost function with super-linear growth of class $C^1$ (i.e. $\theta'(x)$ goes to $\infty$ as $x$ goes to $\infty$) and $\theta^*$ be the Legendre transform of $\theta$. Consider the following Hamilton Jacobi equation:
\begin{equation}\label{HJe}
\begin{cases}
\partial_t v(x,t)+\theta^*(\partial_x v(x,t))=0 & (x,t) \in
 \RR \times(0,\infty)\\v(x,0)=f(x) & x \in \RR.
\end{cases}
\end{equation}
Assume that the initial function $f:\RR\rightarrow \RR$  is convex bounded from below of class $C^1$, then for all convex cost function $\theta_1, \theta_2$, with super-linear growth and satisfying $\theta_1+\theta_2=\theta$, there exists $v_1$, $v_2$ and $f_1,f_2$, such that for $i=1,2$, it holds
\begin{equation*}
\begin{cases}
\partial_t v_i(x,t)+\theta_i^*(\partial_x v_i(x,t))=0 & (x,t) \in
 \RR \times(0,\infty)\\v_i(x,0)=f_i(x) & x \in \RR,
\end{cases}
\end{equation*}
 $v=v_1+v_2$ and $f=f_1+f_2$.
\end{The}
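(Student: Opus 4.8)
The plan is to reduce the statement entirely to Proposition~\ref{thm:Q+}, using as a black box the classical identification of the Hopf-Lax formula with the solution of \eqref{HJe}. Concretely, for a convex cost $\theta$ the Hamiltonian in \eqref{HJe} is $\theta^*$, its Lagrangian is $(\theta^*)^*=\theta$, and the Hopf-Lax solution of $\partial_t v+\theta^*(\partial_x v)=0$ with datum $f$ is precisely $v(x,t)=Q_t^\theta f(x)$. The super-linear growth of $\theta$ is exactly what makes $\theta^*$ finite on all of $\RR$, so that the equation is well posed on the whole line; convexity and $C^1$ regularity of $f$ then guarantee that the infimum defining $Q_t^\theta f$ is attained at a unique point and that $v$ is a genuine $C^1$ solution. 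I will invoke the same fact for each of the two pieces below.

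With this in hand the argument is short. First I would apply Proposition~\ref{thm:Q+} with $\alpha=\theta_1$ and $\beta=\theta_2$: all its hypotheses hold ($\theta_1,\theta_2,\theta$ convex and $C^1$, $\theta$ strictly convex, $f$ convex, bounded below and $C^1$), so it produces convex $C^1$ functions $f_1,f_2$ with $f=f_1+f_2$ and
\[
Q_t^\theta f=Q_t^{\theta_1}f_1+Q_t^{\theta_2}f_2,\qquad \forall\,t>0,
\]
where $f_1(x)=a+\int_0^x \theta_1'\circ\theta'^{-1}\circ f'(u)\,du$. Next I set $v_i(x,t):=Q_t^{\theta_i}f_i(x)$ for $i=1,2$. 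By the Hopf-Lax/Hamilton-Jacobi correspondence recalled above, applied to the convex super-linear cost $\theta_i$ and datum $f_i$, each $v_i$ solves the Hamilton-Jacobi equation with Hamiltonian $\theta_i^*$ and initial value $f_i$. Finally, the displayed identity reads $v=v_1+v_2$ for every $t>0$, and letting $t\to 0$ it reduces to $f=f_1+f_2$, so the prescribed initial conditions are matched. This is exactly the claim.

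The main obstacle is to legitimate the correspondence $v_i=Q_t^{\theta_i}f_i$ for the decomposed data, since Proposition~\ref{thm:Q+} does not assert that $f_1$ or $f_2$ is bounded from below, whereas the clean form of the Hopf-Lax/Hamilton-Jacobi identity is usually stated for data bounded below. I would handle this through the explicit gradient $f_i'=\theta_i'\circ\theta'^{-1}\circ f'$: because $\theta'^{-1}\circ f'$ is the (unique) minimiser map attached to $Q_t^\theta f$ and each $\theta_i$ has super-linear growth, one controls the growth of $f_i$ against that of $\theta_i$, which is precisely what is needed for the infimum defining $Q_t^{\theta_i}f_i$ to be finite, attained, and differentiable in $(x,t)$. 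Verifying this growth comparison, and thereby confirming that each $v_i$ is a bona fide $C^1$ solution rather than merely a formal one, is the only point requiring genuine care; everything else is bookkeeping layered on top of Proposition~\ref{thm:Q+}.
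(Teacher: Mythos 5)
Your proposal follows exactly the paper's route: the paper deduces Theorem~\ref{cor:Q+} "immediately" from Proposition~\ref{thm:Q+} (with $\alpha=\theta_1$, $\beta=\theta_2$) combined with the classical identification $v(x,t)=Q_t^\theta f(x)$ of the Hopf-Lax formula as the solution of \eqref{HJe}, which is precisely your argument. Your additional care about the decomposed data $f_1,f_2$ not being asserted to be bounded from below is a legitimate point that the paper silently glosses over, but it does not change the approach.
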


We begin with a lemma.
\begin{Lem} \label{mapping}
Assume that $\theta$ is strictly convex, even and of class $C^1$. Let $f$ be a convex function bounded from below and of class $C^1$. For $t>0$, denote $I:=\{x\in \RR|f'(x)\in \theta'(\RR)\}$. Define the mapping $U_t$ as 
$$U_t: x\in I \mapsto x+t\theta'^{-1}\circ f'(x)\in \RR.$$
Then $T_t:=U_t^{-1}$ satisfies \eqref{star}.

\end{Lem}

\begin{proof}
Since $\theta$ is even, strictly convex and of class $C^1$, $\theta'(\RR)$ contains a neighborhood of $0$, combining with the fact that $f$ is bounded from below, convex and $C^1$, we deduce that $I$ is not an empty set. 
Since $\theta$ is strictly convex, $\theta':\RR\rightarrow \theta'(\RR)$ is a bijection and it is strictly increasing. Thus $\theta'^{-1}:\theta'(\RR)\rightarrow \RR$ is well defined as well as $U_t$.
It is easy to see that $U_t$ is strictly increasing and continuous.  Now we will show that the image set $U_t(I)=\RR$ and $U_t$ is in fact a bijection from $I$ to $\RR$.

Assume first that $\inf I=-\infty$, then it is easy to see that $\lim_{x\rightarrow -\infty} U_t(x)=- \infty$. Now assume that $\inf I > -\infty$. Then for $x\searrow\inf I$, $f'(x)\searrow \theta(-\infty)$, thus  $\theta'^{-1}f'(x)\rightarrow -\infty$. The same argument holds for $\sup I$. Therefore, $U_t:I\rightarrow \RR$ is bijective and strictly increasing. Thus $U_t^{-1}:\RR\rightarrow I$ is well defined and strictly increasing.

It remains to show that $U_t^{-1}(x)$ is the point achieving the infimum for $Q_t^\theta f(x)$.
For all $x\in \RR$, let $y=U_t^{-1}(x)$, using the fact that $\theta$ is even, it holds
$$\theta'\left(\frac{y-x}{t}\right)=\theta'\left(\frac{y-U_t(y)}{t}\right)=-\theta'\left(\theta'^{-1}\circ f'(y)\right)=-f'(y).$$
According to \eqref{carTt}, it holds $y=T_t(x)$.
\end{proof}

Now we are in position to prove proposition \ref{thm:Q+}.
\begin{proof}[Proof of proposition \ref{thm:Q+}]
We define $f_1$ as following: for all $y\in I$,
$$f_1(y):=\int_0^y \alpha' \circ \theta'^{-1}\circ f'(u)  du.$$
and (in the case that $I\neq \RR$) $f_1$ is affine when $y > \sup I$, and $y< \inf I$ with $f_1'=f_1'(\sup I):=\lim_{x\rightarrow \sup I} f_1'(x)$ and $f_1'=f_1'(\inf I)$ respectively.

We observe that $f_1$ is convex: for all $x\in I$, $f_1'=\alpha' \circ \theta'^{-1}\circ f'$ is non-decreasing, and for $x\in \RR\setminus I$, $f_1'$ is constant which equals to $f_1'=f_1'(\sup I)$ and $f_1'=f_1'(\inf I)$ respectively. It follows that $f_1'$ is non-decreasing on $\RR$.

Given $t>0$ and $x\in \RR$, denote $y=T_t(x)\in I$ and it holds
\begin{align*}
f_1'(y)&=\alpha' \circ \theta'^{-1}\circ f'(y)\\
&=-\alpha' \circ \theta'^{-1}\circ \theta'\left(\frac{y-x}{t}\right)=-\alpha'\left(\frac{y-x}{t}\right).
\end{align*}
We deduce from equation \eqref{carTt} that
\begin{equation}\label{eqalpha}
Q_t^{\alpha}f_1(x)= f_1(T_t(x))+t\alpha\left(\frac{T_t(x)-x}{t}\right).
\end{equation}

Now let $f_2=f-f_1$, together with $\theta=\alpha+\beta$, we deduce
$$f_2'(x)=\begin{cases}  f'-f_1'(\inf I)  & \forall x\in(-\infty, \inf I]\\ \beta'\circ \theta'^{-1}\circ f'(x) & \forall x\in I \\
  f'-f_1'(\sup I)  & \forall x\in [\sup I, \infty) \end{cases},$$
which is non-decreasing. Thus $f_2$ is convex.

On the other hand, since $f_1'(y)=-\alpha'\left(\frac{y-x}{t}\right)$  and $f'(y)=-\theta'\left(\frac{y-x}{t}\right)$, it holds that
$$f_2'(y)=-\beta'\left(\frac{y-x}{t}\right).$$
According to equation \eqref{carTt} and we have
\begin{equation}\label{eqbeta}
Q_t^{\beta}f_2(x)= f_2(T_t(x))+t\beta\left(\frac{T_t(x)-x}{t}\right).
\end{equation}
It immediately yields
\begin{equation*}
Q_t^\theta f=Q_t^\alpha f_1+Q_t^\beta f_2.
\end{equation*}
\end{proof}
Now we are in position to proof theorem \ref{add}.
\begin{proof}[Proof of theorem \ref{add}]
We first prove the case that the convex cost functions $\alpha$,$\beta$ and $\theta$ are $C^1$:

 We assume at first that $\theta=\alpha+\beta$ is strictly convex. 

Observe that by the definition of $\tbar$, it is easy to see that
\begin{equation}\label{sensefacile}
\tbar_{\theta}(\nu|\mu)\geq \tbar_\alpha(\nu|\mu)+\tbar_\beta(\nu|\mu).
\end{equation}
Now we turn to prove the inverse inequality.

According to Proposition \ref{thm:Q+}, for all convex function $f$ bounded from below and of class $C^1$, there exist $f_1$ and $f_2$ such that it holds:
\begin{align*}
\int Q^{\theta} fd\mu-\int fd\nu
&=\int Q^\alpha f_1d\mu-\int f_1d\nu + \int Q^\beta f_2d\mu-\int f_2d\nu\\
&\leq \tbar_\alpha(\nu|\mu)+\tbar_\beta(\nu|\mu).
\end{align*}
We take the supremum over all convex function $f$ bounded from below and of class $C^1$ and by the duality formula \eqref{duality}, we get
$$\tbar_{\theta}(\nu|\mu)\leq \tbar_\alpha(\nu|\mu)+\tbar_\beta(\nu|\mu).$$
The conclusion follows with the inverse inequality \eqref{sensefacile}.

Now assume that $\theta$ is not strictly convex. Since $\mu,\nu\in \mathcal{P}_1(\RR)$, there exists a strictly convex cost function $\gamma$, such that $\int \gamma(x-c) \mu(dx)$ and $\int \gamma(x-c) \nu(dx)$ are finite for all $c\in \RR$. Therefore $\tbar_{\theta+\gamma}(\nu|\mu)$ is finite.
Since $\alpha+\gamma$, $(\alpha+\gamma)+\beta$ are strictly convex, it holds:
\begin{equation}\label{eqthetagamma}
\tbar_{\theta+\gamma}(\nu|\mu)= \tbar_\theta(\nu|\mu)+\tbar_\gamma(\nu|\mu).
\end{equation}
and
\begin{align}\label{eqalphabetagamma}
\tbar_{\theta+\gamma}(\nu|\mu)&= \tbar_{(\alpha+\gamma)+\beta}(\nu|\mu) =\tbar_{\alpha+\gamma}(\nu|\mu)+\tbar_{\beta}(\nu|\mu) \nonumber \\
&=\tbar_{\alpha}(\nu|\mu)+\tbar_{\beta}(\nu|\mu)+\tbar_{\gamma}(\nu|\mu).
\end{align}
Combining equation \eqref{eqthetagamma} and \eqref{eqalphabetagamma}, we deduce that the equality \eqref{eqbut} holds for the case $\alpha,\beta$ and $\theta$ are $C^1$.

Now for general convex cost $\alpha, \beta$ and $\theta$, for all $\epsilon>0$ there exist cost functions $\alpha_\epsilon,\beta_\epsilon$ and $\theta_\epsilon$ satisfying $\theta_\epsilon=\alpha_\epsilon+\beta_\epsilon$ of class $C^1$ such that $$\|\alpha_\epsilon-\alpha\|_{\infty} + \|\beta_\epsilon-\beta\|_\infty + \|\theta_\epsilon-\theta\|_{\infty}<\epsilon.$$
We deduce that 
\begin{align*}
&\tbar_{\alpha}(\nu|\mu)+\tbar_{\beta}(\nu|\mu)\leq \tbar_{\theta}(\nu|\mu)\\
&\leq \tbar_{\theta_\epsilon}(\nu|\mu)+\epsilon= \tbar_{\alpha_\epsilon}(\nu|\mu)+\tbar_{\beta_\epsilon}(\nu|\mu)+\epsilon\\
&\leq \tbar_{\alpha}(\nu|\mu)+\tbar_{\beta}(\nu|\mu)+2\epsilon.
\end{align*}
Let $\epsilon$ goes to $0$ and the theorem follows.

\end{proof}
%
\subsection{An alternative approach}

The proof of theorem \ref{The:main1} in \cite{GRSST15} is based on properties of convex ordering and Rado's theorem. In this section, we provide another way to understand theorem~\ref{The:main1} in aspect of \cite{GRSST15}.

Here we only recall some necessary definitions and properties of the convex ordering and majorization of vectors.  We refer the interested reader to \cite{Majorization}, \cite{Hirsch} and \cite{Rad52} for further results and bibliographic references related to properties of inequalities, and to \cite{GRSST15, GJ2018, BBP2019} for recent developments related to the optimal weak transfer plan.

\begin{Def}[Convex order] Given $\nu_1,\nu_2 \in \mathcal{P}_1(\RR)$,  we say that $\nu_2$ dominates $\nu_1$ in the convex order,
and write $\nu_1\preceq\nu_2$, if for all convex functions $f$ on $\RR$,
$\int_\RR f\,d\nu_1\leq \int_\RR f\,d\nu_2$.
\end{Def}

\begin{Def}[Majorization of vectors]
Let $a,b\in \RR^n$, one says that $a$ is majorized by $b$ if the sum of the largest $j$ components of $a$ is less than or equal to the corresponding sum of $b$, for every $j$, and if the total sum of the components of both vectors are equal.
\end{Def}

Assuming that the components of $a=(a_1,\dots,a_n)$ and $b=(b_1,\dots,b_n)$ are in non-decreasing order (\textit{i.e.}\ $a_1 \leq a_2 \leq \dots \leq a_n$ and $b_1 \leq b_2 \leq \dots \leq b_n$), $a$ is majorized by $b$, if
\[
a_n + a_{n-1} + \cdots + a_{n-j+1} \le b_n + b_{n-1} + \cdots + b_{n-j+1}, \qquad \mbox{for } j=1, \dots,n-1,
\]
and $\sum_{i=1}^n a_i = \sum_{i=1}^n b_i$.

The next proposition recalls the link between majorization of vectors and convex ordering.

\begin{Pro}\label{maj=conv} \cite{GRSST15}
Let $a,b \in \RR^n$ and set $\nu_1 = \frac{1}{n}\sum_{i=1}^n \delta_{a_i}$ and $\nu_2=\frac{1}{n}\sum_{i=1}^n \delta_{b_i}$. The following statements are equivalent.
\begin{enumerate}
\item[(i)] $a$ is majorized by $b$.
\item[(ii)] $\nu_1$ is dominated by $\nu_2$ for the convex order. In other words, for every convex function $f:\RR \to \RR$, it holds that $\sum_{i=1}^n f(a_i) \le \sum_{i=1}^n f(b_i)\,.$
\end{enumerate}
\end{Pro}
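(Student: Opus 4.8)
The plan is to prove the classical Hardy–Littlewood–Pólya equivalence, treating the two implications separately; the convex functions $x\mapsto(x-t)_+$ will be the common thread in both directions, together with the total-sum equality $\sum_i a_i=\sum_i b_i$ built into majorization.

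For (ii) $\Rightarrow$ (i), I would simply feed well-chosen test functions into the hypothesis $\sum_i f(a_i)\le\sum_i f(b_i)$. Testing with $f(x)=x$ and with $f(x)=-x$ (both convex) forces $\sum_i a_i=\sum_i b_i$. For the partial-sum inequalities, fix $j\in\{1,\dots,n-1\}$, set $t=b_{n-j+1}$, and test with $f(x)=(x-t)_+$. Since the entries are in non-decreasing order, on the $b$-side exactly the top $j$ entries are at least $t$, so $\sum_i(b_i-t)_+=\sum_{i=n-j+1}^{n}b_i-jt$; on the $a$-side the elementary bound $(x-t)_+\ge x-t$ gives $\sum_i(a_i-t)_+\ge\sum_{i=n-j+1}^{n}a_i-jt$. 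Combining these with the hypothesis yields $\sum_{i=n-j+1}^{n}a_i\le\sum_{i=n-j+1}^{n}b_i$, which is precisely the majorization condition.

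For the converse (i) $\Rightarrow$ (ii), the key reduction is to the single family of inequalities
\begin{equation*}
\sum_{i=1}^n (a_i-t)_+\le\sum_{i=1}^n (b_i-t)_+\qquad\text{for all }t\in\RR.
\end{equation*}
To derive these from majorization I would use the identity $\sum_i(b_i-t)_+=\max_{0\le j\le n}\bigl(\sum_{i=n-j+1}^{n}b_i-jt\bigr)$, valid because a summand $b_i$ contributes a positive amount precisely when $b_i>t$, so the maximum is attained by taking exactly the entries exceeding $t$. Letting $m=\#\{i:a_i>t\}$, so that $\sum_i(a_i-t)_+=\sum_{i=n-m+1}^{n}a_i-mt$, I then choose $j=m$ in the maximum and invoke the majorization inequality for that $j$ to conclude. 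Finally, writing a convex $f$ on an interval containing all the data in the integral form $f(x)=\alpha+\beta x+\int(x-t)_+\,d\mu(t)$, with $\mu\ge 0$ the (distributional) second derivative $f''$, the affine part $\alpha+\beta x$ contributes equally to both sides by the total-sum equality, while the pointwise hockey-stick inequality integrated against $\mu\ge0$ gives $\sum_i f(a_i)\le\sum_i f(b_i)$.

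The main obstacle is the direction (i) $\Rightarrow$ (ii): everything hinges on upgrading the finitely many top-$j$ partial-sum inequalities to the continuum of inequalities for $(x-t)_+$, and then recovering an arbitrary convex function from that family via the integral representation (for non-smooth $f$ one argues by approximation by $C^2$ convex functions). An alternative route that sidesteps the representation is the Birkhoff–von Neumann theorem: majorization is equivalent to $a=Db$ for some doubly stochastic matrix $D$, whence Jensen's inequality gives $f(a_i)=f(\sum_j D_{ij}b_j)\le\sum_j D_{ij}f(b_j)$, and summation over $i$, using that the columns of $D$ sum to $1$, yields the claim; this is the form recorded in \cite{Majorization}.
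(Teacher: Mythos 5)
Your proof is correct, but note that the paper itself does not prove this proposition at all: it is stated as a recollection of a classical fact (Hardy--Littlewood--P\'olya/Rado), with the reader pointed to \cite{Majorization}, \cite{Hirsch} and \cite{Rad52}. So your argument is necessarily a different route --- it is a self-contained proof of what the paper treats as a black box. Both directions of your argument are sound: for (ii) $\Rightarrow$ (i) the test functions $x$, $-x$ and $(x-t)_+$ with $t=b_{n-j+1}$ do exactly what you claim (your phrase ``exactly the top $j$ entries are at least $t$'' is slightly off when there are ties $b_{n-j}=t$, but tied entries contribute $0$ to $\sum_i(b_i-t)_+$, so the identity $\sum_i(b_i-t)_+=\sum_{i=n-j+1}^{n}b_i-jt$ still holds); for (i) $\Rightarrow$ (ii) the max-identity $\sum_i(b_i-t)_+=\max_{0\le j\le n}\bigl(\sum_{i=n-j+1}^{n}b_i-jt\bigr)$, the choice $j=m=\#\{i:a_i>t\}$ (using total-sum equality when $m=n$), and the representation of a convex function as an affine part plus a nonnegative mixture of hockey-stick functions complete the argument --- and since the second-derivative measure representation holds for arbitrary convex functions on a bounded interval containing the data, the smoothing step you hedge about is not even needed. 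Your alternative via the doubly stochastic matrix characterization and Jensen's inequality is precisely the argument recorded in \cite{Majorization}, i.e.\ the proof the paper implicitly delegates to its references. What your first route buys is elementarity: it avoids invoking the nontrivial Hardy--Littlewood--P\'olya theorem that majorization implies $a=Db$ for doubly stochastic $D$, at the cost of the integral-representation step; the citation route is shorter but rests on that deeper structural result.
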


Thanks to the above proposition and with a slight abuse of notation, we will also
write $a \preceq b$ when $a$ is majorized by $b$.


\medskip
Now we are able to prove an alternative version of theorem \ref{The:main1}. We shall focus on measure $\mu_n$ of form $\frac{1}{n}\sum_{i=1}^n\delta_{x_i}$. For general measure, one can consider it as a limit of $\mu_n$ as $n$ goes to $\infty$, readers can refer to \cite{GRSST15} for rigorous justification. Here is a discrete version of theorem \ref{The:main1} is telling the following:
\begin{The}[Discrete version of  theorem \ref{The:main1}]
Let $\mu=\frac{1}{n}\sum_{i=1}^n \delta_{x_i}$ and $\nu=\frac{1}{n}\sum_{i=1}^n \delta_{y_i}$, where $x_i$ and  $y_i$ are in non-decreasing order. Assume that $\theta$ is a strictly convex cost function. The following statements are equivalent.
 \begin{enumerate}
\item[(i)] The function $i\mapsto x_i-y_i$ is non-decreasing.
\item[(ii)]$\tbar_\theta(\nu|\mu)=\mathcal{T}_\theta(\nu,\mu)$.
\end{enumerate}
\end{The}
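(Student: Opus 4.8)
The plan is to turn the stated equivalence into a finite-dimensional convex optimization over a permutohedron and then compare the objective at its sorted vertex with its values on feasible perturbations. First I would exploit the Strassen-type representation $\tbar_\theta(\nu|\mu)=\inf_{\nu_1\preceq\nu}\mathcal{T}_\theta(\nu_1,\mu)$ recalled in the introduction. Since $\mu=\frac1n\sum_i\delta_{x_i}$ is uniform on $n$ atoms, the barycentric projection of any coupling of $(\mu,\nu)$ produces a measure $\nu_1=\frac1n\sum_i\delta_{b_i}$ carried by at most $n$ atoms, and by Jensen $\nu_1\preceq\nu$; conversely every such $\nu_1\preceq\nu$ is realized by a coupling. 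By Proposition \ref{maj=conv} the constraint $\nu_1\preceq\nu$ is exactly majorization $b\preceq y$, so $b$ ranges over the permutohedron $P(y)=\{b:b\preceq y\}$. Since the one-dimensional transport $\mathcal{T}_\theta(\nu_1,\mu)$ between two uniform $n$-atom measures is attained by the monotone rearrangement, sorting reduces everything to
\[
\tbar_\theta(\nu|\mu)=\min_{b\preceq y}\ \frac1n\sum_{i=1}^n\theta(b_i-x_i),
\]
the minimum being attained because $P(y)$ is compact and $\theta$ is continuous, with the minimizer taken to satisfy $b_1\le\dots\le b_n$ by the rearrangement inequality. As $b=y$ is feasible and gives precisely $\mathcal{T}_\theta(\nu,\mu)=\frac1n\sum_i\theta(y_i-x_i)$, statement (ii) is equivalent to the sorted vertex $b=y$ minimizing the convex function $\Phi(b):=\frac1n\sum_i\theta(b_i-x_i)$ over $P(y)$.

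For the implication (i)$\Rightarrow$(ii) I would argue directly. Assuming (i), the function $g_i:=\theta'(y_i-x_i)$ is non-increasing in $i$, because strict convexity of $\theta$ makes $\theta'$ strictly increasing and $x_i-y_i$ non-decreasing means $y_i-x_i$ non-increasing. For any sorted $b\preceq y$, the tangent inequality for convex $\theta$ gives $\theta(b_i-x_i)-\theta(y_i-x_i)\ge g_i(b_i-y_i)$, hence $\Phi(b)-\Phi(y)\ge\frac1n\sum_i g_i(b_i-y_i)$. Setting $E_k:=\sum_{i=k}^n(b_i-y_i)$, the majorization $b\preceq y$ is exactly $E_1=0$ and $E_k\le0$ for $k\ge2$, and Abel summation yields $\sum_i g_i(b_i-y_i)=\sum_{k=2}^n(g_k-g_{k-1})E_k\ge0$, since $g_k-g_{k-1}\le0$ and $E_k\le0$. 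Thus $\Phi(b)\ge\Phi(y)$ for every feasible $b$, so $b=y$ is optimal and (ii) holds.

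For (ii)$\Rightarrow$(i) I would argue by contraposition with an explicit \emph{Pigou--Dalton} (Robin Hood) transfer. If (i) fails there is an index $i$ with $y_i-x_i<y_{i+1}-x_{i+1}$. Take $b$ equal to $y$ except for the pair $(y_i+\varepsilon,\,y_{i+1}-\varepsilon)$ with $\varepsilon>0$ small: this keeps the vector sorted and, being a transfer from the larger to the smaller coordinate, produces $b\preceq y$, hence a feasible point. The derivative of $\Phi(b)-\Phi(y)$ at $\varepsilon=0$ equals $\frac1n\bigl(\theta'(y_i-x_i)-\theta'(y_{i+1}-x_{i+1})\bigr)<0$ by strict monotonicity of $\theta'$, so $\Phi(b)<\Phi(y)$ for small $\varepsilon$. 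Therefore $\tbar_\theta(\nu|\mu)\le\Phi(b)<\mathcal{T}_\theta(\nu,\mu)$ and (ii) fails.

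The delicate points, rather than the computations, are the reduction step and the regularity of $\theta$. I expect the main obstacle to be justifying rigorously that the weak cost equals the minimum of $\Phi$ over $P(y)$: that barycentric projections of couplings of $(\mu,\nu)$ range over exactly the measures $\nu_1\preceq\nu$ (Strassen together with Rado's theorem), that $\mathcal{T}_\theta(\nu_1,\mu)$ is realized by the monotone coupling, and that the minimum is attained. A secondary subtlety is that the statement only assumes $\theta$ strictly convex: strictly speaking I should replace $\theta'$ by a monotone selection of subgradients in the (i)$\Rightarrow$(ii) direction and invoke strict separation of subdifferentials at distinct points in the converse, though, as the paper already remarks, differentiability is assumed only to suppress such technicalities. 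Finally, ties among the $y_i$ place $y$ on a face rather than a vertex of $P(y)$, but the Abel argument above is insensitive to this, since it uses only the partial-sum inequalities $E_k\le0$, which hold regardless.
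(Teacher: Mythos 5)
Your proposal is correct, and its skeleton matches the paper's proof: both reduce the statement, via the Strassen-type identity $\tbar_\theta(\nu|\mu)=\inf_{\nu_1\preceq\nu}\mathcal{T}_\theta(\nu_1,\mu)$ from \cite{GRSST15} together with Proposition \ref{maj=conv}, to the finite-dimensional problem $\inf_{y'\preceq y}\sum_i \theta(x_i-y'_i)$, and both handle (ii)$\Rightarrow$(i) by a Pigou--Dalton transfer on a violating pair. The two executions differ in the details of each direction. For (i)$\Rightarrow$(ii), the paper shows that (i) forces the majorization $x-y\preceq x-y'$ for every $y'\preceq y$ and then re-invokes Proposition \ref{maj=conv} (majorization implies convex-order domination) to conclude; you instead prove the inequality $\sum_i\theta(x_i-y_i)\le\sum_i\theta(x_i-y'_i)$ directly by the tangent inequality plus Abel summation, which is more self-contained (it is in effect a proof of the Hardy--Littlewood--P\'olya direction being cited) and, as you note, survives with a monotone selection of subgradients when $\theta$ is not differentiable. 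For (ii)$\Rightarrow$(i), the paper chooses the finite transfer $\epsilon=\frac{1}{2}\bigl(y_{i+1}-y_i-(x_{i+1}-x_i)\bigr)$, which makes both perturbed arguments equal to the common midpoint and lets strict midpoint convexity conclude with no derivative at all; your infinitesimal transfer needs $\theta'$ (or strict separation of subdifferentials), so the paper's variant is marginally cleaner on that point, though yours is equally valid once the subgradient caveat you flag is incorporated. Finally, the identification $\tbar_\theta(\nu|\mu)=\min_{b\preceq y}\frac1n\sum_i\theta(b_i-x_i)$, including attainment, is the step you rightly single out as the delicate one; the paper passes over it by citing \cite{GRSST15}, so your more explicit justification (barycentric projection, Jensen, monotone coupling, compactness of the majorization polytope) is a genuine, if modest, strengthening of the write-up.
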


\begin{proof}
Observe that the optimal transfer plan of $\mathcal{T}_\theta(\nu,\mu)$ sends $x_i$ to $y_i$ since $x_i$ and  $y_i$ are in non-decreasing order. As a consequence,
$$\mathcal{T}_\theta(\nu,\mu)=\frac{1}{n}\sum_{i=1}^n \theta(x_i-y_i).$$
On the other hand, denoting $x=(x_1,...,x_n), y=(y_1,...,y_n)\in \RR^n$, according to \cite{GRSST15}, the following holds:
$$\tbar_\theta(\nu|\mu)=\inf_{\nu_1\preceq \nu}\mathcal{T}_\theta(\nu_1,\mu)= \inf_{y'\preceq y} \frac{1}{n}\sum_{i=1}^n \theta(x_i-y'_i).$$
Thus, item $(ii)$ is equivalent to
\begin{equation}\label{eqalter}
\sum_{i=1}^n \theta(x_i-y_i)=\inf_{y'\preceq y} \sum_{i=1}^n \theta(x_i-y'_i).
\end{equation}
Now it is enough to prove that \eqref{eqalter} is equivalent to $(i)$.

$(i)\Rightarrow (ii)$: For any $y'\preceq y$, it holds
$$\sum_{i=k}^n y'_i\leq \sum_{i=k}^n y_i \qquad \forall 1\leq k\leq n.$$
It follows that
$$\sum_{i=k}^n x_i-y_i \leq \sum_{i=k}^n x_i-y'_i \qquad \forall 1\leq k\leq n.$$
Thus $x-y\preceq x-y'$ for all $y'\preceq y$, which leads to \eqref{eqalter}.

$(ii)\Rightarrow (i)$: Assume that $(ii)$ holds and $(i)$ does not. Let $i$ be the smallest integer such that $x_i-y_i>x_{i+1}-y_{i+1}$.
Thus $ y_{i+1}-y_i-(x_{i+1}-x_i)=2\epsilon >0$.

Now define $y'\in \RR^n$ with $y'_j=y_j$ for $j\neq i,i+1$ $y'_i= y_i+\epsilon, y'_{i+1}=y_{i+1}-\epsilon$. It is easy to see that $y'\preceq y$ and by strict convexity of $\theta$:
\begin{align*}
&\sum_{i=1}^n \theta(x_i-y_i)-\sum_{i=1}^n \theta(x_i-y'_i)\\
&= \theta(x_i-y_i)+\theta(x_{i+1}-y_{i+1})-\theta(x_i-y_i-\epsilon)-\theta(x_{i+1}-y_{i+1}+\epsilon)\\
&= \theta(x_i-y_i)+\theta(x_{i+1}-y_{i+1})-2\theta\left(\frac{(x_i-y_i)+(x_{i+1}-y_{i+1})}{2}\right)\\
&>0
\end{align*}

which is a contradiction to \eqref{eqalter}.
\end{proof}

\section{Weak transport in $n$ dimensions}
This section is devoted to extending theorem \ref{cor:Q+} and theorem \ref{The:main1} of the previous section in $n$ dimensions for $n\geq 2$. In this section, we will focus on the space $\RR^n$ and an open subset $\Omega \subset \RR^n$, which correspond to the support of an absolutely continuous measure w.r.t Lebesgue measure.  We assume that all cost functions are convex, only depending on Euclidean distance $\|.\|:=\|.\|_2$ and vanishing in $0$. We denote $\mathcal{L}$ for the Lebesgue measure. 
%

In one dimension, convexity of functions plays a central role in the proofs, but in $n$ dimensions, we will replace the convexity by another stronger property, we call it property $\mathcal{F}$, which is defined as following.

\begin{Def}\label{propertyF}
Let $\Omega\subset \RR^n$ be a convex open set satisfying $\mathcal{L}(\Omega)>0$. We say that $f:\Omega \mapsto \RR$ satisfies the Property  $\mathcal{F}$ on $\Omega$ if the following conditions hold:
\begin{enumerate}
\item $f$ is convex on $\Omega$.
\item For almost all $x\in \Omega$, there exists $\lambda$, such that $\mathrm{Hess} f(x)\nabla f(x)=\lambda \nabla f(x)$. 
\end{enumerate}
\end{Def}
\begin{Rem}
Theorem of Alexandrov \cite{A39} guarantees that the Hessian of a convex function $f$ is well defined on $\RR^n$ almost everywhere.
\end{Rem}

\begin{Rem}
For dimension $n=1$, the Property $\mathcal{F}$ is the convexity.
\end{Rem}

Now let $f:\Omega\rightarrow \RR$ be a continuous function. Denote 
$$\mathcal{H}_f:=\{x\in \Omega, \mathrm{Hess} f(x)\ exists\}.$$ 
Due to regularity issues, we introduce the following smoothness assumption {\bf(SA)}.
{\bf(SA)}: The interior of $\mathcal{H}_f$, denoted by $\interior{\mathcal{H}_f}$, is simply connected, and satisfies 
$$\mathcal{L}(\Omega\setminus \interior{\mathcal{H}_f})=0.$$ 

Now we are ready to state the main theorem of this section, which is an extension of theorem \ref{The:main1} in $n$ dimensions.

\begin{The}\label{Themain1dim_n}
	Let $\mu,\nu\in \mathcal{P}_1(\RR^n)$ be two probability measures absolutely continuous with respect to Lebesgue measure. Assume that the support of the measure $\nu$ is a convex set $\Omega$.  Denote $g$ the convex function such that $\nabla g$ is the transfer plan for Monge-Kantorovich problem of quadratic cost, and satisfying $\nabla g \# \nu = \mu $. If $g$ satisfies the smoothness assumption $\mathrm{(SA)}$, then the following assertions hold.
	\begin{itemize}
		\item[(i)] If there exists a two times differentiable cost function $\theta$ such that $x\theta''(u)-\theta'(u)\neq 0$ for all $u>0$ and  $$\tbar_\theta(\nu|\mu)=\mathcal{T}_\theta(\mu,\nu)=\int \theta (\|x-\nabla g(x)\|)d\nu<\infty.$$ Then the function $x\mapsto g(x)-\frac{1}{2}\langle x,x\rangle$ satisfies the Property $\mathcal{F}$ on $\Omega$, and $\nabla g$ is an optimal transfer plan of $\mathcal{T}_\alpha$ for all convex cost $\alpha$ (such that $\tbar_\theta(\nu|\mu)$ is finite). Since the transport mapping $\nabla g$ of the quadratic cost is unique, $\nabla g$ is the unique optimal transport plan in common for all convex cost $\alpha$.
		
		\item[(ii)] Inversely, if $x\mapsto g(x)-\frac{1}{2}\langle x,x\rangle$ is strictly convex and satisfies Property $\mathcal{F}$, then for all convex cost $\theta$ such that $\tbar_\theta(\nu|\mu)$ is finite, $\nabla g$ is an optimal transfer plan of $\mathcal{T}_\theta$ for all convex cost $\theta$ and it holds $$\tbar_\theta(\nu|\mu)=\mathcal{T}_\theta(\mu,\nu).$$
		
	\end{itemize}
\end{The}
\begin{Rem}
	The existence and uniqueness of the optimal transport mapping $\nabla g$ of the quadratic cost is guaranteed by the Benamou-Brenier theorem \cite{BB2000}. 
\end{Rem}

\begin{Rem}
	In one dimension, adapting the notation in theorem \ref{The:main1}, the mapping $\nabla g$ is correspond to $F_\mu \circ F_\nu^{-1}$. A simple computation shows that the convexity of $g(x)-\frac{1}{2}x^2$ is equivalent to the fact that $F_\mu^{-1}-F_\nu^{-1}$ is non-decreasing. Therefore the equality between the weak transport cost and the Monge-Kantorovich transport cost is equivalence to the convexity of $g(x)-\frac{1}{2}x^2$. The latter theorem shows that with the additional smoothness assumption $\mathrm{(SA)}$, this equivalent still holds. Heuristically, I do believe that it holds true without $\mathrm{(SA)}$, but the regularity issue is very delicate and there is still a lot to investigate on.  
\end{Rem}

In order to prove the theorem, we begin with studying Property $\mathcal{F}$.
\subsection{Characterization of Property  $\mathcal{F}$}
Let us begin with some properties of the set $\mathcal{F}$:

\begin{Lem}\label{lem1}
Let $\Omega\subset \RR^n$ be a convex open set. Let $f:\Omega \rightarrow \RR$ be a continuous function. 
Denote $u=\|\nabla f\|$.
Then for all $x\in \mathcal{H}_f$, the following conditions are equivalent:

$(i)$ there exists $\lambda \in \RR$ such that
$$\mathrm{Hess} f (x) \nabla f(x) =\lambda \nabla f(x). $$

$(ii)$ for all $1\leq i\leq j \leq n$, if $\|\nabla f\|(x)\neq 0$, it holds
$$\partial_i u\partial_j f=\partial_j u \partial_i f.$$
\end{Lem}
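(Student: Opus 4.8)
The plan is to reduce both conditions to a single geometric statement --- that the vector $\mathrm{Hess}\,f(x)\,\nabla f(x)$ is parallel to $\nabla f(x)$ --- by exploiting one elementary identity. Write $H=\mathrm{Hess}\,f$ and recall that $u^2=\sum_k(\partial_k f)^2$. Differentiating this relation with respect to $x_i$ yields the key identity
\[
u\,\partial_i u=\sum_k(\partial_i\partial_k f)(\partial_k f)=(H\nabla f)_i,\qquad 1\le i\le n,
\]
valid at every point where $u$ is differentiable; equivalently, $H\nabla f=u\,\nabla u$ as vectors. This identity is what ties the Hessian condition $(i)$ to the first-order condition $(ii)$, and establishing it cleanly would be the first step.

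With the identity in hand, condition $(i)$ --- that $H\nabla f=\lambda\nabla f$ for some scalar $\lambda=\lambda(x)$ --- reads componentwise as $u\,\partial_i u=\lambda\,\partial_i f$ for all $i$, while condition $(ii)$ reads $\partial_i u\,\partial_j f=\partial_j u\,\partial_i f$ for all $i,j$ (the restriction $i\le j$ is immaterial, since the relation is antisymmetric in $i,j$ and trivial for $i=j$). Both thus assert that $\nabla u$ and $\nabla f$ are proportional, and the factor $u$ in the identity above is precisely what converts one normalization into the other.

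For $(i)\Rightarrow(ii)$ I would multiply $u\,\partial_i u=\lambda\,\partial_i f$ by $\partial_j f$ and $u\,\partial_j u=\lambda\,\partial_j f$ by $\partial_i f$; the two right-hand sides coincide, so $u\,\partial_i u\,\partial_j f=u\,\partial_j u\,\partial_i f$, and dividing by $u$ gives $(ii)$ wherever $u\ne 0$. For $(ii)\Rightarrow(i)$, at a point where $\nabla f\ne 0$ I would fix an index $j_0$ with $\partial_{j_0}f\ne 0$, set $\lambda:=u\,\partial_{j_0}u/\partial_{j_0}f$, and use $(ii)$ to obtain $\partial_i u=(\partial_{j_0}u/\partial_{j_0}f)\,\partial_i f$ for every $i$; multiplying by $u$ and invoking the identity gives $(H\nabla f)_i=\lambda\,\partial_i f$, which is $(i)$. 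One must check that $\lambda$ is independent of the chosen $j_0$, but that independence is exactly the content of $(ii)$.

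The only genuine subtlety is the behaviour on the zero set $\{\nabla f=0\}$, where $u=\|\nabla f\|$ need not be differentiable. There both sides of $(ii)$ vanish because $\partial_i f=\partial_j f=0$, and $(i)$ holds trivially since $H\nabla f=0=\lambda\cdot 0$ for any $\lambda$; hence the equivalence is pointwise valid off this set and vacuous on it. In keeping with the convention stated earlier of suppressing unnecessary technical discussion, I would restrict the main argument to points where $\nabla f\ne 0$ and dispose of the zero set by this observation.
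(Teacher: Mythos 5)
Your proposal is correct and follows essentially the same route as the paper's own proof: both rest on the identity $u\,\partial_i u=\sum_k \partial_k f\,\partial_{ki}f=(\mathrm{Hess}f\,\nabla f)_i$ obtained by differentiating $u^2=\|\nabla f\|^2$, and both directions (multiplying through by $\partial_j f$ for $(i)\Rightarrow(ii)$, fixing an index with $\partial_{j_0}f\neq 0$ to define $\lambda$ for $(ii)\Rightarrow(i)$) match the paper's argument step for step. Your explicit remark that the equivalence is vacuous on the set $\{\nabla f=0\}$, where $u$ may fail to be differentiable, is slightly more careful than the paper's ``there is nothing to prove,'' but it is the same treatment in substance.
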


From this lemma, we can construct some examples of $f$ such that $f\in\mathcal{F}$, by simply checking $(ii)$.
\begin{Exe}
In $n$ dimensions, following functions satisfy Property  $\mathcal{F}$ on $\RR^n$ :
\begin{enumerate}
\item Linear forms.
\item Functions of form $x\in \RR^n\mapsto g(\|x\|)$ where $g:\RR\rightarrow \RR$ is a convex function with $g(0)=0$. 
\item  Functions of form $x\in \RR^n \mapsto a\|x\|^2+L(x)$ with $a>0$ and $L:\RR^n\rightarrow \RR$ being a linear form.
\end{enumerate}
\end{Exe}

\begin{proof}[Proof of lemma \ref{lem1}]
$(i)\Rightarrow (ii)$:

$u$ is well defined on $\mathcal{H}_f$. Suppose that $(i)$ holds. For all $x\in \mathcal{H}_f$ with $\nabla f(x)\neq 0$, it holds for all $1\leq i \leq j \leq n$,
\begin{equation}\label{eqvap2}
\partial_i u=\frac{1}{u}\sum_{k=1}^n\partial_k f \partial_{ki} f=\frac{\lambda}{u} \partial_i f.
\end{equation}
Thus
$$\partial_i u\partial_j f=\frac{\lambda}{u} \partial_i f\partial_j f= \partial_j u \partial_i f.$$
$(ii)\Rightarrow (i)$:
For all $x\in \mathcal{H}_f$ such that $\nabla f(x)\neq 0$,
there exists $j$ such that $\partial_j f\neq 0.$ Let $\lambda = \partial_ju/\partial_j f$, $(ii)$ implies that for all $1\leq i\leq n$,
\begin{equation}\label{eqvap}
\partial_i u=\lambda  \partial_i f.
\end{equation}
 Computing the differential of $u$, it holds
 $$\partial_i u=\frac{1}{u}\sum_{k=1}^n\partial_k f \partial_{ki} f.$$
Together with \eqref{eqvap}, it holds for all $1\leq i\leq n$
$$\sum_{k=1}^n \partial_{ki} f\partial_k f= \lambda u \partial_i f,$$
which means exactly $\mathrm{Hess} f\ \nabla f =\lambda u \nabla f$.
\end{proof}


\begin{Pro}\label{lemcondition}
Let $\Omega\subset \RR^n$ be a convex open set and $f:\Omega\rightarrow \RR$ be a strictly convex function satisfying the smoothness assumption $\mathrm{(SA)}$.

Then the following statements hold.

$(i)$ If $f$ satisfies Property $\mathcal{F}$ on $\Omega$, then for all non-decreasing, differentiable function $G:\RR^+\rightarrow \RR^+$,
there exists $\phi \in \mathcal{F}$ such that for $x\in \Omega$ almost everywhere, it holds
 \begin{equation}\label{eqdim2}
\nabla \phi (x) = G (\|\nabla f\|(x))\frac{\nabla f(x)}{\|\nabla f\|(x)}.
\end{equation}

$(ii)$ Inversely, if there exists a non-decreasing differentiable function $G:\RR^+\mapsto \RR^+$ which satisfies $uG'(u)-G(u)\neq 0$ for all $u> 0$, and there exists a convex function $\phi :\Omega \rightarrow \RR$  such that
equation \eqref{eqdim2} holds almost everywhere, then $f$ satisfies Property $\mathcal{F}$ on $\Omega$.
\end{Pro}

\begin{proof}
The proof of $(i)$ is divided into three steps: we will prove at first the existence of a such function $\phi$ satisfying \eqref{eqdim2}, secondly we will show that $\phi$
is convex, at the end we will prove that $\phi\in \mathcal{F}$.

{\bf Step 1.} \textit{Existence of function $\phi$ such that \eqref{eqdim2} holds.}

By strict convexity of $f$, there exists at most one $x^*$ such that $\nabla f(x^*)=0$.

Denote $\mathbf{F}$ the vector field  $G(\|\nabla f\|)\frac{\nabla f}{\|\nabla f\|}$ defined on the simply connected open set $\interior{\mathcal{H}_f}\setminus{x^*}$, then the $i^{th}$ component of $\mathbf{F}$ is :
\begin{equation}\label{eqF}
\mathbf{F}_i=G(\|\nabla f\|)\frac{\partial_i f}{\|\nabla f\|}.
\end{equation}
We begin with proving the existence of function $\phi$ defined on $\interior{\mathcal{H}_f}\setminus\{x^*\}$, according to a generalized version of Poincar\'e's Lemma \cite{M08} and the smoothness assumption $\mathrm{(SA)}$, we only need to show that on  $\interior{\mathcal{H}_f}$, it holds:
$$\text{rot}\  \mathbf{F} =0,$$
which is equivalent to
\begin{equation}\label{eqrot}
\partial_j \mathbf{F}_i=\partial_i \mathbf{F}_j
\end{equation}
for all $1\leq i,j\leq n$.

Now denote $u=\|\nabla f\|_2$, it holds
\begin{align*}
\partial_j \mathbf{F}_i &= \partial_j\left(\frac{G(u)}{u}\right)\partial_i f+\frac{G(u)}{u}\partial_{ij} f \\
&= \partial_j u \left(\frac{G'(u)u-G(u)}{u^2}\right) \partial_i f +\frac{G(u)}{u}\partial_{ij} f
\end{align*}
The same argument leads to
\begin{equation}\label{eqrot3}
\partial_i \mathbf{F}_j= \partial_i u \left(\frac{G'(u)u-G(u)}{u^2}\right) \partial_j f +\frac{G(u)}{u}\partial_{ij} f.
\end{equation}
Since $f\in \mathcal{F}$, applying lemma \ref{lem1}, we deduce that
\begin{equation}\label{equf}
\partial_j u\ \partial_i f=\partial_i u\ \partial_j f.
\end{equation}
Thus equation \eqref{eqrot} holds and we have the existence of $\phi$ on $\interior{\mathcal{H}_f}\setminus\{x^*\}$. 

According to the assumption $\mathrm{(SA)}$, $\Omega\setminus \left(\interior{\mathcal{H}_f}\setminus\{x^*\}\right) $ is a finite set, together with the fact that $\Omega$ is an open set, one can extend the continuous function $\phi$ on $\Omega$. 

{\bf Step 2.} \textit{$\phi$ is convex.}

We only need to check $\mathrm{Hess}\ \phi$ is positive almost everywhere on $\Omega$.

Fix $x\in \interior{\mathcal{H}_f}\setminus\{x^*\} $, assume that $\mathrm{Hess}\ f  \nabla f =\lambda \nabla f$.  It is enough to prove that $\mathrm{Hess}\ \phi =(\partial_{ij} \phi)_{ij}$ is positive.  According to \eqref{eqrot3} and \eqref{eqvap2},  we have
\begin{align*}
&\partial_{ij}\phi=\partial_ i \mathbf{F}_j \\
&= \partial_i u \left(\frac{G'(u)u-G(u)}{u^2}\right) \partial_j f +\frac{G(u)}{u}\partial_{ij} f \\
&=\frac{G(u)}{u}\left(\partial_{ij} f-\frac{\partial_i u \partial_j f}{u} \right) + \frac{G'(u)}{u^2} \partial_i u\ \partial_j f\\
&=\frac{G(u)}{u}\left(\partial_{ij} f-\lambda \frac{\partial_i f \partial_j f}{u^2} \right) + \lambda \frac{G'(u)}{u^3} \partial_i f\ \partial_j f.
\end{align*}
Observe that $G$ is positive, non-decreasing and $u>0$, it follows that $\frac{G(u)}{u}>0$ and $\frac{G'(u)}{u^3}\geq 0$. Moreover, the convexity of $f$ implies that $\lambda \geq 0$. Thus, it is enough to prove that the matrix $M_0=\left(\partial_{ij} f-\lambda \frac{\partial_i f \partial_j f}{u^2} \right)_{ij}$ and $M_1=(\partial_i f\ \partial_j f)_{ij}$ are positive.

We begin with the positivity of $M_1$. For any vector $w\in \RR^n$,
\begin{equation}\label{hahaha}
^t wM_1 w =\sum_{i,j} w_i \partial_if\partial_j f w_j=\left(\sum_i w_i\partial_i f\right)^2=\langle w,\nabla f\rangle^2 \geq 0.
\end{equation}

Now we turn to prove the positivity of $M_0$.  For any $w\in \RR^n$, write $w=y+a\nabla f$ with $a\in \RR$ and $y$ perpendicular to $\nabla f$.
Noticing that $M_0=\mathrm{Hess} f-\frac{\lambda}{u^2} M_1$, then
$$^t wM_0 w=  ^t(y+a\nabla f) \left(\mathrm{Hess} f-\frac{\lambda}{u^2} M_1\right) (y+a\nabla f).$$
Using $\mathrm{Hess} f\ \nabla f =\lambda \nabla f$ and $u^2=\|\nabla f\|^2$, it holds
\begin{align*}
& ^t(y+a\nabla f) \mathrm{Hess} f \ (y+a\nabla f) \\
&= \langle y, \mathrm{Hess} f\ y \rangle+ a^2 \langle\nabla f, \mathrm{Hess} f \nabla f\rangle +2a \langle y, \mathrm{Hess} f \nabla f\rangle\\
&= \langle y, \mathrm{Hess} f\ y \rangle+ a^2 \langle\nabla f, \lambda \nabla f\rangle +2a \langle y, \lambda \nabla f\rangle\\
&= \langle y, \mathrm{Hess} f\ y \rangle+ a^2 \lambda u^2.
\end{align*}
On the other hand, according to \eqref{hahaha},
\begin{align*}
^t(y+a\nabla f) \frac{\lambda}{u^2} M_1 (y+a\nabla f)=\frac{\lambda}{u^2}\langle y+a\nabla f, \nabla f\rangle^2= \frac{\lambda a^2}{u^2}\langle\nabla f,\nabla f\rangle^2 = \lambda a^2 u^2.
\end{align*}
Thus, together with the convexity of $f$, we deduce that
$$^t wM_0 w=  ^t(y+a\nabla f) \left(\mathrm{Hess} f-\frac{\lambda}{u^2} M_1\right) (y+a\nabla f)= \langle y, \mathrm{Hess} f\ y \rangle\geq 0.$$
Hence, $M_0, M_1$ are positive matrices, and $\mathrm{Hess}\ \phi=\frac{G(u)}{u}M_0+\lambda \frac{G'(u)}{u^3}M_1$ is positive.

{\bf Step 3.} $\phi \in \mathcal{F}$.

The convexity of $\phi$ is proved in step 2. It is enough to show that for all $x\in \Omega$ almost everywhere, there exists $\lambda \in \RR$ such
that $\mathrm{Hess}\phi(x) \nabla\phi(x)=\lambda \nabla \phi (x)$.
Adapting the notations in step 1, applying lemma \ref{lem1}, it is enough to show that on $\interior{\mathcal{H}_f}\setminus\{x^*\}$, it holds
\begin{equation}\label{eqFij}
\partial_i \|F\| F_j=\partial_j\|F\| F_i.
\end{equation}
We develop $\partial_i \|F\| F_j$ by  \eqref{eqF} and \eqref{eqrot3}:
\begin{align*}
F_j\partial_i \|F\|
&=\frac{G(u)}{u} \partial_j f \frac{1}{\|F\|}\sum_{k=1}^n \frac{G(u)}{u}\partial_k f\left[\partial_i u\left(\frac{G'(u)-G(u)}{u^2}\right)\partial_k f+\frac{G(u)}{u}\partial_i f\right]\\
&=A_{ij}+B_{ij},
\end{align*}
where
$$A_{ij}:= \frac{G(u)}{u} \partial_j f \frac{1}{\|F\|}\sum_{k=1}^n \frac{G(u)}{u}\partial_k f\left[\partial_i u\left(\frac{G'(u)-G(u)}{u^2}\right)\partial_k f\right]$$
and
$$B_{ij}:=\frac{G(u)}{u} \partial_j f \frac{1}{\|F\|}\sum_{k=1}^n \left(\frac{G(u)}{u}\right)^2\partial_k f \partial_i f.$$
It is easy to see that $B_{ij}=B_{ji}$. Using \eqref{equf}, we deduce that $A_{ij}=A_{ji}$. Therefore, equation \eqref{eqFij} holds and it follows that $\phi \in \mathcal{F}$. The proof of item $(i)$ is completed.

Now we turn to prove item $(ii)$. Adapting the notations in the step 1 of the proof of item $(i)$, the existence of $\phi$ guarantees that
$$\mathrm{rot}\  \mathbf{F}=0,$$
Developing the latter equation (see \eqref{eqrot3}), it holds
$$\partial_i u \left(\frac{G'(u)u-G(u)}{u^2}\right) \partial_j f +\frac{G(u)}{u}\partial_{ij} f=\partial_j u \left(\frac{G'(u)u-G(u)}{u^2}\right) \partial_i f +\frac{G(u)}{u}\partial_{ij} f.$$
By assumption of $G$, we deduce that for almost all $x\in \Omega$,
$$\partial_i u\partial_j f=\partial_j u\partial_i f.$$
The conclusion follows by applying lemma \ref{lem1}.
\end{proof}

Now we present extensions of proposition \ref{thm:Q+} and theorem \ref{cor:Q+} in the case of dimension $n$.
\begin{Pro}\label{adddimn}
Let $\alpha, \beta, \theta$ be differentiable strictly convex cost functions such that $\alpha + \beta=\theta$. We assume that $\theta$ is strictly convex. Then if $f$ is differentiable and satisfying Property $\mathcal{F}$ on $\Omega$, there exists $\phi, \psi\in \mathcal{F}$,  such that
\begin{equation*}
f=\phi+\psi
\end{equation*}
and for all $t>0$, it holds for $x\in \Omega$ almost surely,
\begin{equation*}
Q_t^\theta f(x)=Q_t^\alpha \phi(x)+Q_t^\beta \psi(x)
\end{equation*}
\end{Pro}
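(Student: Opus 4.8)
The plan is to mimic the one-dimensional argument of Proposition \ref{thm:Q+}, replacing the scalar formula $f_1' = \alpha' \circ \theta'^{-1} \circ f'$ with its vectorial analogue built through the machinery of Proposition \ref{lemcondition}. In dimension one the key was that $f_1$ was constructed so that, at the minimizing point $y = T_t(x)$, one has $f_1'(y) = \alpha'((y-x)/t)$ whenever $f'(y) = \theta'((y-x)/t)$. The higher-dimensional substitute must produce a function $\phi \in \mathcal{F}$ whose gradient is aligned with $\nabla f$ but rescaled by $\theta$-to-$\alpha$ conversion. Concretely, I would define $\phi$ via $\nabla \phi = G(\|\nabla f\|)\frac{\nabla f}{\|\nabla f\|}$ with the radial rescaling function $G = \alpha' \circ \theta'^{-1}$ (extended by $G(0)=0$ using $\theta'^{-1}(0)=0$, which holds since $\theta$ is even-type, minimized at $0$). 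Since $f \in \mathcal{F}$ and $G$ is increasing of class $C^1$ with $G(0)=0$, Proposition \ref{lemcondition}$(i)$ furnishes such a $\phi \in \mathcal{F}$; I then set $\psi = f - \phi$ and must check $\psi \in \mathcal{F}$ as well, noting $\nabla \psi = (\beta' \circ \theta'^{-1})(\|\nabla f\|)\frac{\nabla f}{\|\nabla f\|}$, which is again of the form covered by Proposition \ref{lemcondition}$(i)$ with $G = \beta' \circ \theta'^{-1}$.

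The heart of the argument is then to verify the splitting of the Hopf-Lax operators. First I would establish the $n$-dimensional analogue of the characterization \eqref{carTt}: the infimum defining $Q_t^\theta f(x)$ is attained at a point $y = T_t(x)$ satisfying the stationarity condition $\nabla f(y) + \theta'\!\left(\frac{\|y-x\|}{t}\right)\frac{y-x}{\|y-x\|} = 0$, so that $\nabla f(y)$ points in the direction of $x-y$ and has magnitude $\theta'(\|y-x\|/t)$. This forces $\|\nabla f(y)\| = \theta'(\|y-x\|/t)$, hence $\theta'^{-1}(\|\nabla f(y)\|) = \|y-x\|/t$. Feeding this into the definition of $\nabla \phi$ at $y$ gives $\nabla \phi(y) = \alpha'(\|y-x\|/t)\frac{\nabla f(y)}{\|\nabla f(y)\|} = -\alpha'(\|y-x\|/t)\frac{y-x}{\|y-x\|}$, which is precisely the stationarity condition for $Q_t^\alpha \phi(x)$ with the \emph{same} minimizer $y = T_t(x)$. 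An identical computation with $\beta$ shows $y$ is also the minimizer for $Q_t^\beta \psi(x)$. Summing the three stationarity identities and using $\alpha + \beta = \theta$ and $\phi + \psi = f$, I obtain
\[
Q_t^\theta f(x) = f(T_t(x)) + t\theta\!\left(\tfrac{T_t(x)-x}{t}\right) = \phi(T_t(x)) + t\alpha(\cdots) + \psi(T_t(x)) + t\beta(\cdots) = Q_t^\alpha \phi(x) + Q_t^\beta \psi(x).
\]

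The main obstacle I anticipate is the handling of the degenerate set $\{\|\nabla f\| = 0\}$ and the behavior of the minimizer $T_t$ there, since the direction $\nabla f/\|\nabla f\|$ is undefined and the inverse $\theta'^{-1}$ must be controlled near $0$; Proposition \ref{lemcondition}$(i)$ already dictates $\nabla \phi = 0$ on this set, and one must check that the stationarity equations degenerate consistently (the minimizer satisfies $y = x$ there). A secondary technical point is justifying that the stationarity condition genuinely characterizes the global minimizer rather than a mere critical point; this follows from the strict convexity of $u \mapsto f(u) + t\theta(\|u-x\|/t)$, which holds because $\theta$ is strictly convex and $f$ is convex, exactly as the strict convexity of $G$ was used in dimension one. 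Once these regularity issues are dispatched, the algebraic splitting is a direct consequence of the aligned-gradient construction, and the proof closes by invoking that $\phi, \psi \in \mathcal{F}$ as established above.
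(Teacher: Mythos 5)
Your proposal is correct and follows essentially the same route as the paper: both invoke Proposition \ref{lemcondition} with $G=\alpha'\circ\theta'^{-1}$ (and $G=\beta'\circ\theta'^{-1}$) to build the aligned-gradient functions, then use the first-order condition at the minimizer $T_t(x)$ of $Q_t^\theta f(x)$ to show that the \emph{same} point minimizes the $\alpha$- and $\beta$-problems, and sum. Your only deviations are cosmetic improvements: defining $\psi=f-\phi$ explicitly (which, unlike the paper's ``same argument'' construction, makes the identity $f=\phi+\psi$ immediate) and flagging the degenerate set $\{\nabla f=0\}$ and the strict convexity needed for the stationarity condition to pin down the global minimizer.
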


\begin{proof}
Since $f$ satisfying Property $\mathcal{F}$ and $\mathrm{(SA)}$  on $\Omega$,
according to Proposition \ref{lemcondition} for $G=\alpha'\circ{\theta'^{-1}}$, there exists a $\phi\in \mathcal{F}$ such that
\begin{equation}\label{ei}
 \nabla \phi(x)= \alpha' \circ \theta'^{-1}(\|\nabla f\|(x))\frac{\nabla f(x)}{\|\nabla f\|(x)}
\end{equation}
holds for $x\in \Omega$ almost surely.

Denote $T_t(x)$ the point such that
$$Q_t^\theta f(x)=f(T_t(x))+t\theta\left(\frac{\|x-T_t(x)\|}{t}\right).$$
It follows that  $x\in \Omega$ almost surely,
$$\nabla f(T_t(x))=\theta'\left(\frac{\|x-T_t(x)\|}{t}\right)\frac{x-T_t(x)}{\|x-T_t(x)\|}.$$
Combining equation \eqref{ei}, we get
\begin{align*}
\nabla \phi(T_t(x))&= \alpha' \circ \theta'^{-1}(\|\nabla f\|)\frac{\nabla f}{\|\nabla f\|}(T_t(x))\\
&=\alpha'\left(\frac{\|x-T_t(x)\|}{t}\right)\frac{x-T_t(x)}{\|x-T_t(x)\|},
\end{align*}
which implies that for all $x\in \RR^n$,
\begin{equation}\label{q alpha}
Q_t^\alpha\phi(x)=\phi(T_t(x))+t\alpha\left(\frac{\|x-T_t(x)\|}{t}\right).
\end{equation}
Now letting 
$$\psi=f-\phi,$$
it holds
$$\nabla \psi(T_t(x))=\beta'\left(\frac{\|x-T_t(x)\|}{t}\right)\frac{x-T_t(x)}{\|x-T_t(x)\|}.$$
It follows that
\begin{equation}\label{q beta}
Q_t^\alpha\psi(x)=\psi(T_t(x))+t\beta\left(\frac{\|x-T_t(x)\|}{t}\right).
\end{equation}
Summing \eqref{q alpha} and \eqref{q beta} leads to the conclusion.
\end{proof}

\subsection{Proof of Theorem \ref{Themain1dim_n}}    

Now we are ready to prove theorem \ref{Themain1dim_n}.
\begin{proof} 
We first prove $(ii)$:
Since $g$ satisfies the smoothness assumption $\mathrm{(SA)}$ and 
$$x\mapsto g(x)- \frac{1}{2}\langle x,x\rangle$$ is strictly convex, according to Proposition \ref{lemcondition} by taking $f:=x\mapsto g(x) -\frac{1}{2}\langle x,x \rangle $ and $G= \theta'$,
there exists a convex function $\phi$ such that it holds for $x\in \Omega$ almost everywhere,
$$\nabla \phi(x) =\theta'(\|x-\nabla g(x)\|)\frac{\nabla g(x)-x}{\|x-\nabla g(x)\|}.$$
Since $\Omega$ is the support of $\mu$,  it follows that $\nu$ almost surely
$$Q^\theta \phi(\nabla g (x))=\phi(x)+\theta(\|x-\nabla g (x)\|).$$
Hence,
\begin{align*}
\mathcal{T}_\theta(\mu,\nu)&\leq\int \theta(\|x-\nabla g(x)\|)d\nu\\
&=\int Q^\theta \phi(\nabla g(x))- \phi(x) d\nu\\
&=\int Q^\theta \phi d\mu -\int \phi d\nu \leq \tbar_\theta (\nu|\mu).
\end{align*}
Together with the fact that $\tbar_\theta (\nu|\mu)\leq \mathcal{T}_\theta(\mu,\nu)$, the equality holds.

Now we prove that the transfer plan does not depend on the cost function.

Since $\theta$ is two times differentiable and convex, applying proposition \ref{lemcondition} to equation \eqref{eqhh} for $G=\theta'$, together with the fact that $x\rightarrow g(x)-\frac{1}{2}\langle x, x\rangle\in \mathcal{F}$, we deduce that $\phi\in \mathcal{F}$.
Now given a convex cost function $\alpha$, applying  Proposition \ref{lemcondition} for $G=\alpha'\circ\theta'^{-1}$, we deduce that there exists $\psi\in \mathcal{F}$ such that for $\nu$ almost surely,
$$\nabla \psi(x)=\alpha'\circ \theta'^{-1}\frac{\nabla \phi(x)}{\|\nabla \phi(x)\|}.$$
It follows that $\nu$ almost surely,
$$Q^\alpha \psi(\nabla g (x))=\psi(x)+\alpha(\|x-\nabla g (x)\|).$$
The conclusion follows by writing the definition of optimal transport and its Kantorovich's duality:
$$\mathcal{T}_\alpha(\mu,\nu)=\inf_\pi\left\{\int \alpha (\|x-y\|)d\pi\right\} \leq \int \alpha(\|x-\nabla g(x)\|)d\nu,$$
and
\begin{align*}
\mathcal{T}_\alpha(\mu,\nu)&=\sup_f\left\{\int Q^\alpha f d\mu -\int f d\nu \right\}\\
&\geq \int Q^\alpha \psi d\mu -\int \psi d\nu\\
&=\int \alpha(\|x-\nabla g(x)\|)d\nu.
\end{align*}
By a similar argument with the suprement taken over all $f$ convex, Lipschitz and lower semi-continuous, it holds
$$\tbar_\alpha(\nu|\mu)=\int \alpha(\|x-\nabla g(x)\|)d\nu.$$

We now turn to prove $(i)$:
For all convex function $\phi$, it holds for all $x\in \Omega$,
\begin{equation}\label{eqnn}
Q^\theta \phi(\nabla g(x))\leq \phi(x)+\theta(\|x-\nabla g (x)\|).
\end{equation}
Now let $\phi$ be the convex function such that
$\tbar_\theta(\nu|\mu)= \int Q^\theta \phi d\mu -\int \phi d\nu$.
Together with \eqref{eqnn}, it follows
\begin{align*}
\tbar_\theta(\nu|\mu)&= \int Q^\theta \phi d\mu -\int \phi d\nu\\
&\leq \int \theta(\|x-\nabla g (x)\|)d\nu =\mathcal{T}_\theta(\mu,\nu).
\end{align*}
The assumption of $(i)$ implies that the inequality in the latter formula is in fact equality. Thus, for $x\in \Omega$, $\nu$ almost surely,
equation \eqref{eqnn} holds. We deduce that
\begin{equation}\label{eqhh}
\nabla \phi(x) =\theta'(\|x-\nabla g(x)\|)\frac{\nabla g(x)-x}{\|x-\nabla g(x)\|}.
\end{equation}
According to Proposition \ref{lemcondition}, the conclusion follows.

\end{proof}
\section{Applications}
\subsection{A simple example}
In the space $\RR^n$, let $\mu$ be the uniform probability measure on the unit ball $B(0,1)$ and $\nu$ be the uniform probability measure on the ball $B(a,1/4)$.   Observe that $T:=x\mapsto 4x-a$ satisfies $T\#\nu=\mu$ and define $g(x):=2\|x\|^2-ax$ for $x\in B(a,1/4)$. It is easy to check that $\nabla g=T$ and it is the optimal transfer plan $\mathcal{T}(\mu,\nu)$ with quadratic cost. Now observe that $g(x)-\frac{1}{2}\|x\|^2$ satisfies Property $\mathcal{F}$ on $B(a,1/4)$.  According to item $(ii)$ of  theorem~\ref{Themain1dim_n}, we can deduce that $T$ is an optimal transfer plan for both weak and classical transport problem with convex costs depending on the distance.    

\subsection{Links with the infimum convolution inequality}
The so-called infimum operator inequalities were first introduced by Maurey in \cite{Mau91}. They are closely related to Transport-cost inequalities.

Let us stay in the space $\RR^n$ and adapt the settings before. We say that a probability measure $\mu$ satisfies the inf-convolution inequality $\mathrm{IC}(\theta)$ with the cost $\theta$ if the following holds for all measurable functions bounded from below $f:\RR^n\mapsto \RR$:
$$\int e^{Q^\theta f} d\mu \int e^{-f}d\mu \leq 1.$$
This inequality was proved to be equivalent to the transport cost inequality (see \cite{BG99a}):
$$\mathcal{T}_\theta (\nu,\mu)\leq H(\nu|\mu),$$
where $H(\nu|\mu)$ is the related entropy of $\nu$ with respect to $\mu$.

 Now consider the inf-convolution inequality restricted to the class $\mathcal{F}$ (denoted by $\mathrm{rIC}(\theta)$):
$$\int e^{Q^\theta f} d\mu \int e^{-f}d\mu \leq 1 \qquad \forall f\in \mathcal{F}.$$
According to proposition \ref{adddimn}, let  $\alpha$, $\beta$ be convex costs such that $\alpha+\beta=\theta$, assume that $\mathrm{rIC}(\alpha)$ and $\mathrm{rIC}(\beta)$ hold, then $\mathrm{rIC}(\frac{1}{2}\theta)$ holds. The proof is simply apply Cauchy Schwartz inequality and proposition \ref{adddimn}, details are left to the readers.

We remark that in one dimension, Property $\mathcal{F}$ is convexity. Feldheim and al. in \cite{FMNW15} and Gozlan and al. in \cite{GRSST15} proved independently that for a quadratic linear cost $\alpha$, the inequality $\mathrm{rIC}(\alpha)$ is equivalent to the convex Poincar\'e inequality. For general convex cost $\theta$, $\mathrm{rIC}(\theta)$ is equivalent to the weak transport inequality $\tbar_\theta \leq H$. The fact that $\mathrm{rIC}(\alpha)$ and $\mathrm{rIC}(\beta)$ imply $\mathrm{rIC}(\theta)$ is simply $\tbar_\theta=\tbar_\alpha+\tbar_\beta$ in one dimension.

\section*{Acknowledgement}
I warmly thank my PhD advisers Natha\"el Gozlan and Cyril Roberto, and my friend Andrew Mittleider for helpful advice and remarks.

I sincerely thank anonymous referees for precious advice which helped to largely improve the article. 

\bibliographystyle{plain}

\begin{thebibliography}{10}

\bibitem{A39}
A.~D. Aleksandrov.
\newblock Almost everywhere existence of the second differential of a convex
  functions and related properties of convex surfaces.
\newblock {\em Uchenye Zapisky Leningrad. Gos. Univ. Math. Ser.}, 37(37):3--35,
  1939.

\bibitem{BBP2018}
Julio {Backhoff-Veraguas}, Mathias {Beiglb{\"o}ck}, and Gudmund {Pammer}.
\newblock {Existence, Duality, and Cyclical monotonicity for weak transport
  costs}.
\newblock {\em ArXiv e-prints}, (arXiv:1809.05893), 2018.

\bibitem{BBP2019}
Julio {Backhoff-Veraguas}, Mathias {Beiglb{\"o}ck}, and Gudmund {Pammer}.
\newblock {Weak monotone rearrangement on the line}.
\newblock {\em ArXiv e-prints}, (arXiv:1902.05763), 2019.

\bibitem{BB2000}
J.-D. Benamou and Y.~Brenier.
\newblock A computational fluid mechanics solution to the monge-kantorovich
  mass transfer problem.
\newblock {\em Numerische Mathematik}, 84:375--393, 2000.

\bibitem{BG99a}
S.~Bobkov and F.~G{\"o}tze.
\newblock Exponential integrability and transportation cost related to
  logarithmic {S}obolev inequalities.
\newblock {\em J. Funct. Anal.}, 163(1):1--28, 1999.

\bibitem{B91}
Y.~Brenier.
\newblock Polar factorization and monotone rearrangement of vector-valued
  functions.
\newblock {\em Communications on Pure and Applied Mathematics}, 44(4), 1991.

\bibitem{evans10}
Lawrence~C. Evans.
\newblock {\em Partial differential equations}.
\newblock American Mathematical Society, 2010.

\bibitem{FS15}
M.~Fathi and Y.~Shu.
\newblock Curvature and transport inequalities for markov chains in discrete
  spaces.
\newblock {\em Bernoulli}, 24(1):672--698, 02 2018.

\bibitem{FMNW15}
N.~Feldheim, A.~Marsiglietti, P.~Nayar, and J.~Wang.
\newblock A note on the convex infimum convolution inequality.
\newblock {\em Bernouilli}, (24(1)), 2015.

\bibitem{GM2000}
W.~Gangbo and R.~J. McCann.
\newblock Shape recognition via wasserstein distance.
\newblock {\em Q. Appl. Math.}, LVIII(4):705--737, 2000.

\bibitem{GJ2018}
N.~{Gozlan} and N.~{Juillet}.
\newblock {On A Mixture Of Brenier and Strassen Theorems}.
\newblock {\em ArXiv e-prints}, (arXiv:1808.02681), 2018.

\bibitem{GRSST15}
N.~Gozlan, C.~Roberto, P.-M. Samson, Y.~Shu, and P.~Tetali.
\newblock Characterization of a class of weak transport-entropy inequalities on
  the line.
\newblock {\em Ann. Inst. H. Poincar\'e Proba. Statist.}, 54(3):1667--1693,
  2018.

\bibitem{GRST15}
N.~Gozlan, C.~Roberto, P-M. Samson, and P.~Tetali.
\newblock Kantorovich duality for general transport costs and applications.
\newblock {\em Journal of Functional Analysis}, 273(11):3327 -- 3405, 2017.

\bibitem{Hirsch}
F.~Hirsch, C.~Profeta, B.~Roynette, and M.~Yor.
\newblock {\em Peacocks and associated martingales, with explicit
  constructions}, volume~3 of {\em Bocconi \& Springer Series}.
\newblock Springer, Milan; Bocconi University Press, Milan, 2011.

\bibitem{M08}
S.~Mardare.
\newblock On poincar\'e and de rham's theorems.
\newblock {\em Rev. Roumaine Math. Pures Appl.}, 53, 2008.

\bibitem{Majorization}
A.-W. Marshall, I.~Olkin, and B.-C. Arnold.
\newblock {\em Inequalities: theory of majorization and its applications}.
\newblock Springer Series in Statistics. Springer, New York, second edition,
  2011.

\bibitem{Mau91}
B.~Maurey.
\newblock Some deviation inequalities.
\newblock {\em Geom. Funct. Anal.}, 1(2):188--197, 1991.

\bibitem{M95}
R.J. McCann.
\newblock Existence and uniqueness of monotone measure-preserving maps.
\newblock {\em Duke Math. J.}, (163):1--28, 1999.

\bibitem{Rad52}
R.~Rado.
\newblock An inequality.
\newblock {\em J. London Math. Soc.}, 27:1--6, 1952.

\bibitem{R70}
T.~Rockafellar.
\newblock {\em Convex Analysis}.
\newblock Princeton University Press, 1970.

\bibitem{R97}
L.~R\"uschendorf and L.~Uckelmann.
\newblock {\em On Optimal Multivariate Couplings}.
\newblock Springer Netherlands, 1997.

\bibitem{samson2017}
Paul-Marie Samson.
\newblock Transport-entropy inequalities on locally acting groups of
  permutations.
\newblock {\em Electron. J. Probab.}, 22, 2017.

\bibitem{Sh15}
Y.~Shu.
\newblock Hamilton-jacobi equations on graphs and applications.
\newblock {\em Potential Analysis}, 48:125--157, 2018.

\bibitem{SS16}
Y.~Shu and M.~Strzelecki.
\newblock Characterization of a class of convex modified log-sobolev
  inequalities on the line.
\newblock {\em Ann. Inst. H. Poincar\'e Probab. Statist.}, 54(4), 2018.

\bibitem{Str65}
V.~Strassen.
\newblock The existence of probability measures with given marginals.
\newblock {\em Ann. Math. Statist.}, 36:423--439, 1965.

\bibitem{Villani}
C.~Villani.
\newblock {\em Optimal transport, old and new}, volume 338 of {\em Grundlehren
  der Mathematischen Wissenschaften [Fundamental Principles of Mathematical
  Sciences]}.
\newblock Springer-Verlag, Berlin, 2009.

\end{thebibliography}

\end{document}